\theoremstyle{plain}
\newtheorem{theorem}{\bf Theorem}[section]
\newtheorem{proposition}[theorem]{\bf Proposition}
\newtheorem{lemma}[theorem]{\bf Lemma}
\theoremstyle{definition}
\newtheorem{definition}[theorem]{\bf Definition}
\newcommand{\N}{\mathbb N}
\newcommand{\Z}{\mathbb Z}
\newcommand{\R}{\mathbb R}
 \DeclareMathOperator{\ind}{ind}
\renewcommand{\time}{\negthinspace \times \negthinspace}
\numberwithin{equation}{section}
\renewcommand{\thefootnote}
\begin{document}
\title[Subharmonic solutions of first order Hamiltonian systems with subquadratic condition]{Subharmonic solutions of first order Hamiltonian systems with subquadratic condition
}
\maketitle
\footnotetext[1]{Partially supported by initial Scientific Research Fund of Zhejiang Gongshang University.
{E-mail: ss.tang@foxmail.com}}

\begin{center}
 \vskip 20pt
{\bf Shanshan Tang}\\\vskip 15pt
{\it School of Statistics and Mathematics, Zhejiang Gongshang University,\\
Hangzhou 310018, P.R. China}\\
\end{center}

\vskip 10mm
\begin{abstract}
{Using a homologically link theorem in variational theory and iteration inequalities of Maslov-type index, we prove the existences of a sequence of subharmonic solutions for one type of sub-quadratic  non-autonomous Hamiltonian systems. Moreover, we also study the minimal period problem of some autonomous Hamiltonian systems with sub-quadratic condition.}
\end{abstract}

{\small \vskip 4mm
{\bf Keywords:}  {Maslov-type index, Morse index, homologically link, subharmonic solutions, Hamiltonian systems}

{\bf 2010 Mathematics Subject Classification:}  34C25, 58E05, 58F05}

\section{Introduction and main results}
\bigskip

In this paper, we first consider subharmonic solutions of the following non-autonomous Hamiltonian system
\begin{equation}\label{1}
\left\{
\begin{array}{l}
\dot{z}=JH_{z}^{\prime}(t, z),\ \forall z\in\R^{2n},\\
\!z(kT) = z(0), \ k\in\N,
\end{array}\right.
\end{equation}
where $T>0$, $H_{z}^{\prime}$ denotes the gradient of $H$ with respect to the variable $z\in\R^{2n}$, and $J=\left( \begin{array}{cc}
0 \ \ & -I_{n}\\
I_{n} & 0
\end{array} \right)$
with $I_{n}$ being the identity matrix on $\R^{n}$. Without confusion, we shall omit the
subindex of the identity matrix.

Let $p=(p_{1}, \cdots, p_{n})$, $q=(q_{1}, \cdots, q_{n})\in\R^{n}$ and $z=(p, q)$. Set $H_{z}^{\prime}(t, z)=(H_{p}^{\prime}(t,z), H_{q}^{\prime}(t, z))$. We denote $\vert p\vert$ and $p\cdot q$ the norm and inner product in $\R^{n}$ respectively. Denote any principal diagonal matrix diag$\{a, \cdots, a, b, \cdots, b\}\in \R^{2n}$ by $V(a,b)$ with $a,b\in\R$, then $V(a, b)(z)=(ap_{1}, \cdots, ap_{n}, bq_{1},\cdots, bq_{n})$.

For  a subquadratic Hamiltonain, now we assume the Hamiltonian satisfying the following hypotheses as in \cite{GX} with a bit difference
\begin{enumerate}

\item[(H1)] $H\in C^{2}(\R\times\R^{2n}, [0, +\infty))$ and $H(t+T, z)=H(t, z)$, $\forall t\in\R$, $\forall z\in\R^{2n}$ ;

\smallskip
\item[(H2)] There exist constants $\sigma$, $\omega>0$ such that
\begin{equation*}
\lim_{\vert z\vert\to+\infty}\frac{H(t, z)}{\vert p\vert^{1+\frac{\sigma}{\omega}}+\vert q\vert^{1+\frac{\omega}{\sigma}}}=0;
\end{equation*}

\smallskip
\item[(H3)] There are constants $c_{1}$, $\mu$, $\upsilon>0$ satisfying $\frac{1}{\mu}+\frac{1}{\upsilon}=1$ such that
\begin{equation*}
\frac{1}{\mu}H_{p}^{\prime}(t, z)\cdot p+\frac{1}{\upsilon}H_{q}^{\prime}(t, z)\cdot q\geq -c_{1}, \ \forall (t, z)\in \R\times\R^{2n};
\end{equation*}

\smallskip
\item[(H4)] There exist constants $c_{2}$, $c_{3}>0$ and $\beta\in (1, 2)$ such that
\begin{equation*}
H(t, z)-\frac{1}{\mu}H_{p}^{\prime}(t, z)\cdot p-\frac{1}{v}H_{q}^{\prime}(t, z)\cdot q\geq c_{2}\vert z\vert^{\beta}-c_{3}, \ \forall (t, z)\in \R\times\R^{2n};
\end{equation*}

\smallskip
\item[(H5)] There are constant $\lambda\in[1, \frac{\beta^{2}}{\beta+1})$ and $b_{0}>0$ such that
\begin{equation*}
\vert H^{\prime\prime}_{zz}(t, z)\vert\leq b_{0}(\vert z\vert^{\lambda-1}+1),\ \forall (t, z)\in \R\times\R^{2n};
\end{equation*}

\smallskip
\item[(H6)] $H(t, 0)=0$ and $\vert H_{z}^{\prime}(t, z)\vert>0$, $\forall z\neq 0$.

\end{enumerate}

Given $j\in\Z$ and a $k\tau$-periodic solution $(z, k\tau)$ of the system (\ref{1}), we define the phase shift $j\ast z$ of $z$ by $j\ast z(t)=z(t+j\tau)$. Recall that two solutions $(z_{1}, k_{1}\tau)$ and $(z_{2}, k_{2}\tau)$ are geometrically distinct if $j\ast z_{1}\neq l\ast z_{2}$ for all $j$, $l\in\Z$.

\bigskip

We now state the main results of this paper.

\begin{theorem}\label{the:1}
Suppose $H$ satisfies (H1)-(H6). Set $\alpha=T/2\pi$. Then there exists $\alpha_{0}>0$ such that for any $T\geq 2\pi\alpha_{0}$ and each integer $k\geq 1$, the system (\ref{1}) possesses a nontrivial $kT$-periodic solution $z_{k}$ with its Maslov-type index satisfying
\begin{equation*}
i_{kT}(z_{k})\leq n\leq i_{kT}(z_{k})+\nu_{kT}(z_{k}).
\end{equation*}
Moreover, if $i_{kT}(z_{k})+\nu_{kT}(z_{k})>n$, then $z_{k}$ and $z_{lk}$ are geometrically distinct provided $l>\frac{2n}{i_{kT}(z_{k})+\nu_{kT}(z_{k})-n}$.
\end{theorem}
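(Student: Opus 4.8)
The plan is to set up the variational problem on the space $E_{kT}=W^{1/2,2}(\R/kT\Z;\R^{2n})$ with the usual Hamiltonian action functional
\begin{equation*}
f_{kT}(z)=\int_0^{kT}\Bigl(\tfrac12(-J\dot z)\cdot z-H(t,z)\Bigr)\,dt,
\end{equation*}
and to obtain the solution $z_k$ as a critical point produced by a homological linking theorem. First I would verify that, under (H1)--(H5), $f_{kT}$ satisfies the Palais--Smale (or Cerami) condition: hypothesis (H2) gives the subquadratic growth needed to control the nonlinear term, (H3)--(H4) supply the coercivity estimate $f_{kT}(z)-\tfrac1\mu\langle f_{kT}'(z),P z\rangle-\cdots\geq c\,\|z\|^\beta-c'$ along a PS sequence (this is the standard trick of testing with a rescaled version of $z$ adapted to the weights $\mu,\upsilon$), and (H5) bounds the Hessian so that bootstrapping gives strong convergence. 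Next I would construct the linking geometry: split $E_{kT}$ according to the spectrum of the operator $-J\,d/dt$ shifted by the Hessian of the dominant quadratic part, use (H6) together with the normalization $\alpha=T/2\pi\ge\alpha_0$ to guarantee that $z=0$ is a local minimum/maximum on the appropriate finite-codimensional subspace and that $f_{kT}$ is bounded below on the linking sphere, and thereby obtain a nontrivial critical point $z_k$ by the homological linking theorem cited in the abstract.

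The Maslov-type index bound $i_{kT}(z_k)\le n\le i_{kT}(z_k)+\nu_{kT}(z_k)$ should come from the standard identification, for a mountain-pass/linking critical point in this setting, of the Morse index (and nullity) of $z_k$ as a critical point of $f_{kT}$ with the Maslov-type index pair $(i_{kT}(z_k),\nu_{kT}(z_k))$ up to the constant shift $n$; since the linking dimension is exactly $n$, the critical point has Morse index $\le n$ and Morse index plus nullity $\ge n$, which translates into the stated inequality. Here I would invoke the index theory (Conley--Zehnder/Long) relating analytic and Morse indices, presumably recalled in an earlier section of the paper.

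For the geometric distinctness claim, the key tool is the iteration inequality for the Maslov-type index: for the $l$-fold iterate one has $i_{lkT}(z_k^l)\ge l\,i_{kT}(z_k)+(l-1)\nu_{kT}(z_k)-(n-1)$ or, more usefully here, a lower bound of the form $i_{lkT}(z_k^l)+\nu_{lkT}(z_k^l)\ge l\bigl(i_{kT}(z_k)+\nu_{kT}(z_k)\bigr)-(l-1)n$ type estimate (I would use whichever of Long's iteration inequalities is stated earlier). Suppose, for contradiction, that $z_{lk}$ is (up to phase shift) an iterate of $z_k$; then its index would have to equal $n$ up to the interval condition from Theorem applied at level $lk$, i.e. $i_{lkT}(z_{lk})\le n\le i_{lkT}(z_{lk})+\nu_{lkT}(z_{lk})$. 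Combining this with the iteration lower bound and using $i_{kT}(z_k)+\nu_{kT}(z_k)>n$, one gets $n\ge i_{lkT}(z_k^l)\ge l\bigl(i_{kT}(z_k)+\nu_{kT}(z_k)\bigr)-\text{(bounded by }ln\text{ terms)}$, which for $l>\frac{2n}{i_{kT}(z_k)+\nu_{kT}(z_k)-n}$ forces a contradiction; hence $z_{lk}$ cannot be an iterate of $z_k$, and being distinct solutions at different periods they are geometrically distinct.

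The main obstacle I anticipate is twofold. First, establishing the compactness condition for $f_{kT}$ is delicate precisely because the problem is subquadratic with \emph{anisotropic} weights $|p|^{1+\sigma/\omega}$ and $|q|^{1+\omega/\sigma}$ in (H2): the natural test function is not $z$ itself but the weighted combination $(\tfrac1\mu p,\tfrac1\upsilon q)$ appearing in (H3)--(H4), and one must carefully interpolate using (H5) (the exponent restriction $\lambda<\beta^2/(\beta+1)$ is exactly what makes this work) to upgrade a bounded PS sequence to a convergent one. Second, pinning down the precise iteration inequality with the right constants so that the threshold comes out as exactly $\frac{2n}{i_{kT}(z_k)+\nu_{kT}(z_k)-n}$ requires care — in particular one must handle the nullity terms correctly and rule out the degenerate case where the iterate's index lands in the admissible window for a spurious reason. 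The rest (verifying (H1)--(H6) give the linking geometry, the Morse-index identification) is routine given the cited variational and index-theoretic machinery.
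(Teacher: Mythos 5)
Your overall skeleton matches the paper's strategy (homological linking with Morse index estimates, conversion of Morse index information into the Maslov-type index pair, and then Proposition \ref{prop:1} plus the iteration inequality of Proposition \ref{prop:2} to get distinctness), and your last step is essentially the paper's: if $z_{lk}$ were a phase shift of an iterate of $z_k$, then $l\bigl(i_{kT}(z_k)+\nu_{kT}(z_k)-n\bigr)-n\le i_{lkT}(z_{lk})\le n$ forces $l\le \frac{2n}{i_{kT}(z_k)+\nu_{kT}(z_k)-n}$. However, there are two genuine gaps in the existence/index part. First, the linking geometry you propose (a standard spectral splitting, a sphere in one subspace on which $f_{kT}$ is bounded below, and $z=0$ a local extremum on a complementary subspace) does not work under (H2): the growth condition is anisotropic, and when $\sigma\neq\omega$ one of the exponents $1+\sigma/\omega$, $1+\omega/\sigma$ exceeds $2$, so $H$ is not subquadratic in $|z|$ and the naive estimate of the action on a round sphere in $E^{+}$ fails. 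The paper's fix (following Guo--Xing), and its main new technical point, is to deform the linking sets by the anisotropic dilation $B_{\vartheta}z=(\vartheta^{\tilde\omega-1}p,\vartheta^{\tilde\sigma-1}q)$, which satisfies $\langle AB_{\vartheta}z,B_{\vartheta}z\rangle=\vartheta^{\varrho-2}\langle Az,z\rangle$ and makes the quadratic term dominate on $B_{\vartheta}(\partial Q)$ (Lemma \ref{lem:5}); one then needs the bespoke linking statement that $B_{\vartheta}(\partial Q_m)$ and $S_m=E_m^{-}\oplus E^{0}+u_{0}$ homologically link (Lemmas \ref{lem:11} and \ref{lem:4}). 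Your plan contains neither the dilation nor any substitute for it.

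Second, the index identification as you state it ("the linking dimension is exactly $n$, so the critical point has Morse index $\le n$") is not meaningful for the strongly indefinite functional: every critical point of $f_{kT}$ on $E$ has infinite Morse index, and the relevant linking ball is not $n$-dimensional. The paper instead works with the Galerkin restrictions $G_m=G|_{E_m}$, applies Abbondandolo's Theorem \ref{the:2} there to get $m(z_m)\le \dim E_m^{+}=\tfrac12\dim E_m-n\le m^{\ast}(z_m)$, uses the (PS)$^{\ast}$ condition (Lemma \ref{lem:2}) to pass to a limit critical point, and then converts the finite-dimensional Morse index bounds into $i_{kT}(z_k)\le n\le i_{kT}(z_k)+\nu_{kT}(z_k)$ via the Fei--Qiu relation between $\dim M_d^{\pm}(P_m(A-B)P_m)$ and $(i_{\tau},\nu_{\tau})$; some version of this approximation-plus-relative-index machinery is indispensable and is missing from your outline. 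Relatedly, nontriviality of $z_k$ is not obtained from a local minimum/maximum structure at $0$: (H6) only guarantees $0$ is the unique trivial solution, and the paper shows instead that for $\alpha\ge\alpha_0$ the linking critical value satisfies $G_{\alpha}(z_{\alpha})\ge 1>0=G_{\alpha}(0)$, via a three-case Benci--Rabinowitz type estimate on $S_m$; this is precisely where the threshold $\alpha_0$ (i.e.\ $T\ge 2\pi\alpha_0$) enters, a point your sketch leaves unexplained.
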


For the Hamiltonian $H$ contains a quadratic term, i.e., $H(t,z)=\frac{1}{2}(\widehat{B}(t)z, z)+\widehat{H}(t, z)$, we state the result as follows.

We set $w=\max_{t\in\R}\vert\widehat{B}(t)\vert$.

\begin{theorem}\label{the:4}
Suppose $\widehat{H}$ satisfies (H1)-(H6) and
\begin{enumerate}
\item[(H7)]$\widehat{B}(t)$ is a $T$-periodic, symmetric and continuous matrix function and satisfies
\begin{equation*}
(\widehat{B}(t)z, z)=2(\widehat{B}(t)z, V(\frac{1}{\mu}, \frac{1}{\upsilon})(z)), \ \ \forall (t, z)\in\R\times\R^{2n}.
\end{equation*}
\end{enumerate}
We also require there exists an unbounded sequence $\{\rho_{m}\}\subset (0, +\infty)$ with $\inf\rho_{m}=0$ such that
\begin{equation*}
(\widehat{B}(t)B_{\rho}z, B_{\rho}z)=\rho^{\varrho-2}(\widehat{B}(t)z, z),\ \ \forall (t, z)\in\R\times\R^{2n}.
\end{equation*}
hold for $\rho\in\{\rho_{m}\}$, where $B_{\rho}z=(\rho^{\tilde{\omega}-1}p, \rho^{\tilde{\sigma}-1}q)$ for $\rho>0$ with $\varrho$, $\tilde{\omega}$, $\tilde{\sigma}$ defined as in Section 3.
Set $\alpha=T/2\pi$. Then there exists $\alpha_{0}>0$ such that for any $T\geq 2\pi\alpha_{0}$ and each integer $1\leq k\leq \frac{2\pi}{wT}$, the system (\ref{1}) possesses a nontrivial $kT$-periodic solution $z_{k}$ with its Maslov-type index satisfying
\begin{equation*}
i_{kT}(z_{k})\leq n\leq i_{kT}(z_{k})+\nu_{kT}(z_{k}).
\end{equation*}
Moreover, if $i_{kT}(z_{k})+\nu_{kT}(z_{k})>n$, then $z_{k}$ and $z_{lk}$ are geometrically distinct provided $l>\frac{2n}{i_{kT}(z_{k})+\nu_{kT}(z_{k})-n}$ and $lk\leq \frac{2\pi}{wT}$.

\end{theorem}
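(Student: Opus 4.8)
The plan is to run, for the perturbed functional, the same variational scheme that establishes Theorem~\ref{the:1}. On the fractional Sobolev space $E=W^{1/2,2}(\R/kT\Z,\R^{2n})$ consider
\[
f_{kT}(z)=\frac12\int_0^{kT}\big(J\dot z-\widehat B(t)z,\,z\big)\,dt-\int_0^{kT}\widehat H(t,z)\,dt ,
\]
whose critical points are precisely the $kT$-periodic solutions of~(\ref{1}). Writing $f_{kT}(z)=\tfrac12\langle A_{kT}z,z\rangle-\psi_{kT}(z)$, where $A_{kT}$ is the self-adjoint extension of $-J\tfrac{d}{dt}-\widehat B(t)$ on $E$ and $\psi_{kT}(z)=\int_0^{kT}\widehat H(t,z)\,dt$, one first checks the Cerami (or Palais--Smale) condition. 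Here the hypothesis (H7) together with the scaling identity $(\widehat B(t)B_\rho z,B_\rho z)=\rho^{\varrho-2}(\widehat B(t)z,z)$ along the sequence $\{\rho_m\}$ is what keeps the quadratic term compatible with the rescaling $z\mapsto B_{\rho_m}z$ of Section~3: the rescaled functional still has the subquadratic form for which (H2)--(H5) give compactness exactly as in Theorem~\ref{the:1}.

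The genuinely new point is the spectral bookkeeping caused by $\widehat B$. On $kT$-periodic functions the operator $-J\tfrac{d}{dt}$ has spectrum $\{2\pi m/(kT):m\in\Z\}$, so its nonzero eigenvalues are separated from $0$ by $2\pi/(kT)$; the restriction $k\le 2\pi/(wT)$, i.e.\ $w=\max_t|\widehat B(t)|\le 2\pi/(kT)$, means $A_{kT}$ is a bounded perturbation of $-J\tfrac{d}{dt}$ of norm below that gap. Hence $\ker A_{kT}$ is trivial (or, in the borderline case, is the same as for $\widehat B=0$ and is dealt with identically), and the spectral splitting $E=E^+_{kT}\oplus E^-_{kT}$ is only a finite-dimensional modification of the one used when $\widehat B=0$. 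The homological linking configuration of Theorem~\ref{the:1} --- a finite-dimensional sphere of the correct dimension linking an infinite-dimensional half-space --- therefore survives, and the homological linking theorem produces a nontrivial critical point $z_k$ of $f_{kT}$; the threshold $\alpha_0$ for $T$ is exactly the one inherited from Theorem~\ref{the:1} that makes this linking geometry, and the corresponding minimax level, effective.

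To locate the index I would compute the Morse index and nullity of $f_{kT}$ at $z_k$, which are controlled by the self-adjoint operator $-J\tfrac{d}{dt}-\widehat B(t)-\widehat H''_{zz}(t,z_k(t))$; via the identification of these with the Maslov-type pair $(i_{kT}(z_k),\nu_{kT}(z_k))$ of the solution $z_k$ of~(\ref{1}), the fact that $z_k$ comes from a linking ``of dimension $n$'' forces $i_{kT}(z_k)\le n\le i_{kT}(z_k)+\nu_{kT}(z_k)$, as in Theorem~\ref{the:1}. Finally, for geometric distinctness, assume $i_{kT}(z_k)+\nu_{kT}(z_k)>n$ and that for some admissible $l$ (i.e.\ $lk\le 2\pi/(wT)$, which is what makes $z_{lk}$ available) the orbit of $z_{lk}$ coincides with that of the $l$-fold iterate $z_k^{\,l}$. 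Since the Maslov-type index is invariant under phase shift, this would give $i_{lkT}(z_k^{\,l})=i_{lkT}(z_{lk})\le n$. But the iteration (Bott-type) inequalities for the Maslov-type index yield a lower bound of the form $i_{lkT}(z_k^{\,l})\ge l\,\big(i_{kT}(z_k)+\nu_{kT}(z_k)-n\big)-n$, which exceeds $n$ precisely when $l>2n/\big(i_{kT}(z_k)+\nu_{kT}(z_k)-n\big)$ --- a contradiction. Hence $z_k$ and $z_{lk}$ are geometrically distinct for such $l$.

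The main obstacle is not the linking argument itself but showing that the quadratic term $\tfrac12(\widehat B(t)z,z)$ coexists with the two devices the proof depends on: the Section~3 rescaling (used for compactness) and the index iteration (used for the subharmonics). The first needs exactly the homogeneity built into (H7) and the scaling identity along $\{\rho_m\}$; the second needs the perturbed operators $A_{lkT}$ to remain invertible with controlled spectral splitting for every iterate, which is the source of the admissibility window $1\le k\le 2\pi/(wT)$ (and $lk\le 2\pi/(wT)$). The crux is to choose $\alpha_0$ so that, uniformly for $k$ in this window, the Cerami condition, the linking level, and the index identity $i_{kT}(z_k)\le n\le i_{kT}(z_k)+\nu_{kT}(z_k)$ all hold simultaneously.
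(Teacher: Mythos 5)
Your plan diverges from the paper's, and as written it has a genuine gap at the decisive point: the index interval. You absorb the quadratic term into the linear operator $A_{kT}=-J\frac{d}{dt}-\widehat{B}(t)$ and build the linking from the spectral splitting of $A_{kT}$. But the number $n$ in the conclusion $i_{kT}(z_{k})\leq n\leq i_{kT}(z_{k})+\nu_{kT}(z_{k})$ comes, in the scheme of Theorem \ref{the:3}, from the dimension count $\dim E_{m}^{+}=\frac{1}{2}\dim E_{m}-n$ for the splitting of the \emph{unperturbed} operator $A$, combined with the Galerkin index formula of Fei and Qiu. If you replace that splitting by the one of $A_{kT}$, the dimension of the finite-dimensional ball in the linking becomes $\frac{1}{2}\dim E_{m}-i_{kT}(\widehat{B})-\nu_{kT}(\widehat{B})$, and the same computation yields $i_{kT}(z_{k})\leq i_{kT}(\widehat{B})+\nu_{kT}(\widehat{B})\leq i_{kT}(z_{k})+\nu_{kT}(z_{k})$, which is the stated interval only when $i_{kT}(\widehat{B})+\nu_{kT}(\widehat{B})=n$. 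The spectral-gap restriction $k\leq\frac{2\pi}{wT}$ does not force this: e.g.\ $\widehat{B}=-\epsilon I$ with $\mu=\upsilon=2$, $\sigma=\omega$ is admissible under (H7) and the scaling hypothesis, yet has $i_{kT}(\widehat{B})+\nu_{kT}(\widehat{B})=-n$. Your kernel claim is also too quick: $\|\widehat{B}\|_{\infty}$ below the gap does not make $\ker A_{kT}$ trivial or "the same as for $\widehat{B}=0$" (a mean-zero rotating $\widehat{B}(t)=b(t)I$ gives a $2n$-dimensional kernel of non-constant functions), so the construction of $S_{m}$ and the Case 1--3 estimates of Step 2 of Theorem \ref{the:3}, which use that $E^{0}$ consists of constants and apply (H3)--(H5) to the constant component, do not transfer "identically".

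The paper does the opposite: it keeps $\widehat{B}$ inside the Hamiltonian, $H(t,z)=\frac12(\widehat{B}(t)z,z)+\widehat{H}(t,z)$, and uses the two structural hypotheses exactly where your sketch misplaces them. Condition (H7) makes the quadratic term cancel in the combination $G(z)-G^{\prime}(z)V(\frac{1}{\mu},\frac{1}{\upsilon})(z)$, so the (H3)/(H4)-type estimates driving the (PS) and (PS)$^{*}$ arguments (Lemmas \ref{lem:3} and \ref{lem:2}) hold for the full $H$; the homogeneity $(\widehat{B}(t)B_{\rho}z,B_{\rho}z)=\rho^{\varrho-2}(\widehat{B}(t)z,z)$ along $\{\rho_{m}\}$ (with $\inf\rho_{m}=0$, so a small $\delta\in\{\rho_{m}\}$ is available) is what keeps the anisotropic scaling estimate of Lemma \ref{lem:5} on $B_{\vartheta}(\partial Q)$ valid, since there the quadratic term must scale like $\langle Az,z\rangle$ --- it has nothing to do with compactness. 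With these two points checked, the paper simply reruns Theorem \ref{the:3} and the iteration-inequality argument of Theorem \ref{the:1} verbatim. In your scheme these hypotheses are essentially unused, which is a further sign that the argument, as stated, cannot yield the theorem; if you insist on the $A_{kT}$-decomposition you must redo the linking geometry and the index bookkeeping relative to $(i_{kT}(\widehat{B}),\nu_{kT}(\widehat{B}))$, and the conclusion you would reach is not the one claimed.
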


We also consider the minimal periodic solutions of the following autonomous Hamiltonian systems
\begin{equation}\label{20}
\left\{
\begin{array}{l}
\dot{z}=JH_{z}^{\prime}(z),\  z\in\R^{2n},\\
z(T) = z(0).
\end{array}\right.
\end{equation}

\begin{theorem}\label{the:7}
Suppose the autonomous Hamiltonian $H(z)$ satisfies (H1)-(H6) and
\begin{enumerate}
\item[(H8)]$H_{zz}^{\prime\prime}(z)$ is strictly positive for every $z\in\R^{2n}\setminus\{0\}$.
\end{enumerate}
Set $\alpha=T/2\pi$. Then there exists $\alpha_{0}>0$ such that for any $T\geq 2\pi\alpha_{0}$, the system (\ref{20}) possesses a nontrivial solution $z$ with minimal period $T$.
\end{theorem}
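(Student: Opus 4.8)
The plan is to deduce the minimal period result from Theorem \ref{the:1} applied to the autonomous system, exploiting the extra hypothesis (H8) together with the index-iteration machinery. First, observe that an autonomous $H$ satisfying (H1)--(H6) in particular satisfies the hypotheses of Theorem \ref{the:1}, so for $T\ge 2\pi\alpha_0$ and each integer $k\ge 1$ we obtain a nontrivial $kT$-periodic solution $z_k$ with
\[
i_{kT}(z_k)\le n\le i_{kT}(z_k)+\nu_{kT}(z_k).
\]
Take $k=1$ and write $z=z_1$, a nontrivial $T$-periodic solution. Let $\tau>0$ be its minimal period; then $T=m\tau$ for some integer $m\ge 1$, and I must rule out $m\ge 2$. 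The strategy is: if $m\ge 2$, then viewing $z$ as the $m$-th iterate of the $\tau$-periodic solution and applying the iteration inequalities for the Maslov-type index (Section 2), together with the lower bound coming from (H8), I will force $i_{T}(z)+\nu_T(z)$ to be strictly larger than what the link theorem permits, a contradiction with $i_{T}(z)\le n\le i_T(z)+\nu_T(z)$.

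The key steps, in order, are the following. Step 1: For an autonomous system the function $\dot z(t)$ is a $\tau$-periodic solution of the linearized equation along $z$, hence $\nu_\tau(z)\ge 1$; by (H8) the linearized Hamiltonian has $H_{zz}''(z(t))$ strictly positive, which (via the standard convexity/positivity argument for the index, as in Ekeland and Long) gives the sharper mean-index lower bound $\hat i(z)>0$ and in fact $i_\tau(z)+\nu_\tau(z)\ge i_\tau(z)+1$ with a definite sign on $\hat i(z)$. Step 2: Use the iteration formula $i_{m\tau}(z)=\sum$ (splitting numbers / Bott-type formula from Section 2) to get a lower bound of the form $i_{m\tau}(z)\ge m\,\hat i(z) - C_n$ and $i_{m\tau}(z)+\nu_{m\tau}(z)\ge m\,\hat i(z)+$ (something), where the constants depend only on $n$. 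Step 3: Combine with $i_{m\tau}(z)=i_T(z)\le n$ to bound $m$ from above in terms of $\hat i(z)$ and $n$; then show, using the normalization $\hat i(z)>0$ forced by (H8) and the precise constants, that $m\ge 2$ is impossible once $\alpha_0$ (equivalently $T$) is taken large enough — the largeness of $T$ enters because $\hat i(z)$ scales with $\alpha=T/2\pi$ through the subquadratic growth, exactly as in the proof of Theorem \ref{the:1}. Hence $m=1$ and $\tau=T$.

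The main obstacle I anticipate is Step 2 and Step 3 together: making the iteration inequalities quantitative enough to \emph{exclude} every $m\ge2$ rather than merely bounding $m$, and verifying that the constants produced by the Bott iteration formula (which involve $\nu_\tau(z)$ and the splitting numbers $S^\pm$) are compatible with the window $i_T(z)\le n\le i_T(z)+\nu_T(z)$ for all large $T$. In particular one must check that $\nu_T(z)$ does not grow with $m$ in a way that reopens the possibility $i_T(z)+\nu_T(z)>n$ for some $m\ge2$; this is where the strict positivity (H8) is essential, since it controls the splitting numbers (they vanish except possibly at the eigenvalue $1$, contributed by the autonomous direction $\dot z$). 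A secondary technical point is ensuring $z$ is not constant, i.e. truly has a well-defined minimal period: this follows from (H6), since $H_z'(z)\ne0$ for $z\ne0$ means no nonzero constant is a solution, and $z$ nontrivial then forces $\dot z\not\equiv 0$.
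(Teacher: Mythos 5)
Your overall skeleton (get existence with the index window from the linking argument, then rule out iteration of a shorter-period orbit using (H8) and the index iteration theory) matches the paper, but at the decisive point your argument has a genuine gap, one you yourself flag as the "main obstacle". From (H8) you only extract $\nu_{\tau}(z)\ge 1$ (the autonomous direction $\dot z$) and a positive mean index $\hat{i}(z)>0$, and then hope that a quantitative Bott-type/splitting-number estimate of the form $i_{m\tau}\ge m\,\hat{i}(z)-C_n$ will exclude every $m\ge 2$. This cannot work as stated: $\hat{i}(z)>0$ gives no lower bound away from zero, so $m\,\hat{i}(z)-C_n$ need not exceed $n$ for $m=2,3,\dots$, and your suggested fix --- that $\hat{i}(z)$ "scales with $\alpha=T/2\pi$" so largeness of $T$ saves the day --- is unsubstantiated (the solution itself changes with $T$, and in the paper the largeness of $T$ is used only to produce the solution, not to exclude iterates). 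The missing idea is Proposition \ref{prop:4}: because $H''_{zz}(z(t))$ is strictly positive (H8), the linearization along the orbit over one minimal period $T/k$ already satisfies $i_{T/k}(z)\ge n$, which is far stronger than $\hat{i}(z)>0$.

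With that input the paper's proof is short: Theorem \ref{the:3} gives a nontrivial $T$-periodic solution with $i_{T}(z)\le n$; if the minimal period is $T/k$, autonomy and (H8) give $\nu_{T/k}(z)\ge 1$ and $i_{T/k}(z)\ge n$, and then Proposition \ref{prop:5} (equivalently, the iteration inequality of Proposition \ref{prop:3}, which yields $i_{T}(z)\ge n+(k-1)\nu_{T/k}(z)>n$ whenever $k\ge 2$) forces $k=1$. So to repair your proposal you should replace the mean-index step by the pointwise index bound $i_{T/k}(z)\ge n$ coming from strict positivity, after which no quantitative control of splitting numbers, of $\nu_{T}(z)$, or of the size of $T$ is needed. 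Your secondary remark that (H6) ensures the solution is non-constant is fine and consistent with the paper.
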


The first result for existence of subharmonic periodic solutions of the system (\ref{1}) was obtained by Rabinowitz in \cite{Rab3}. Since then, many mathematicians made their contributions in this topic. See for example \cite{ChenTang,ek, ekho,LOT,LC1,LiuZhang,MT,Silva}. For the brake subharmonic solutions of Hamiltonian systems we refer to \cite{li,zhangdz}. For the $P$-symmetric subharmonic solutions we refer to \cite{liutss3}.

In \cite{Rab1}, Rabinowitz proposed a conjecture on whether a superquadratic Hamiltonian system possesses a non-constant periodic solution having any prescribed minimal period. After paper \cite{Rab1}, much work has been done in this field. We refer to \cite{CIEI,EKH1,dolong,FQ2,FKW} for the minimal periodic solutions. For the minimal periodic problem of brake solutions of Hamiltonian systems, we refer to \cite{LC8,zhangdz}.   For the minimal $P$-symmetric periodic solutions of Hamiltonian systems, we refer to\cite{LC2,liutss2,tss}.

Linking theorems provides a simple but extremely powerful method to prove the existence of critical points. We follow the ideas in \cite{LiuZhang} which use the linking Theorem \ref{the:2} to look for the critical points and estimate the corresponding Morse index, based on those, we study the subharmonic solutions and minimal periodic solutions under subquadratic conditions by the method in \cite{LC1} respectively. The main difficult is to construct two sets and prove that they are homological linking which is the content of Lemma \ref{lem:2}. Our idea comes from \cite{GX}. In \cite{GX}, F. Guo and Q. Xing constructed a similar linking structure to study the existence of periodic solutions for subquadratic Hamiltonian systems.

This paper is divided into 3 sections. In Section 2, we briefly sketch some notions about the Maslov-type index and the iteration inequalities developed by C. Liu and Y. Long in \cite{LL2}. We also recall the homologically link theorem in \cite{AA} from which we can find a critical point of the corresponding functional together with Morse index information. We prove that there is a homologically link structure for the functional under the conditions of Theorem \ref{the:1}. In Section 3, we give a proof of Theorem \ref{the:1}-\ref{the:7}.

\bigskip

\section{Preliminaries}
\smallskip

We first review the Maslov-type index and some iteration properties. Here we use the notions and results in \cite{LL2} and \cite{LO}.

For $\tau>0$, we recall that symplectic group is defined as $Sp(2n)=\{M \in \mathcal L(\R^{2n}) \mid M^{T}JM=J\}$, where $\mathcal L(\R^{2n})$ is the space of $2n\times 2n$ real matrices, the set of symplectic paths is defined by $\mathcal P_{\tau}(2n)=\{\gamma \in C([0,\tau], Sp(2n)) \mid \gamma(0)= I\}$.

Let $S_{\tau}=\R/\tau\Z$ and $\mathfrak{L}_{s}(\R^{2n})$ denotes all symmetric real $2n\times 2n$ matrices. For $B(t)\in C(\R, \mathfrak{L}_{s}(\R^{2n}))$, suppose $\gamma$ is the fundamental solution of the linear Hamiltonian systems
\begin{equation}\label{2}
\dot y(t)= JB(t)y,\ \ \ y\in \R^{2n}.
\end{equation}
Then the Maslov-type index pair of $\gamma$ is defined as a pair of integers
\begin{equation*}
(i_{\tau}, \nu_{\tau})\equiv (i_{\tau}(\gamma), \nu_{\tau}(\gamma))\in \Z\times \{0,1,\cdots,2n\},
\end{equation*}
where $i_{\tau}$ is the index part and
\begin{equation*}
\nu_{\tau}=\dim \ker(\gamma(\tau)-I)
\end{equation*}
is the nullity. We also call $(i_{\tau}, \nu_{\tau})$ the Maslov-type index of $B(t)$, just as in \cite{LL2,LO}. If $(z, \tau)$ is a $\tau$-periodic solution of (\ref{1}), then the Maslov-type index of the solution $z$ is defined to be the Maslov-type index of $B(t)=H_{zz}^{\prime\prime}(t, z(t))$ and denoted by $(i_{\tau}(z), \nu_{\tau}(z))$. We call the solution $z$ is non-degenerate if $\nu_{\tau}(z)=0$.

For $\gamma\in \mathcal P_{\tau}(2n)$, we define the $m$-th iteration path $\gamma_{m}: [0, m\tau] \to Sp(2n)$ of $\gamma$ by
\begin{equation*}
\gamma^{m}(t) = \gamma(t-j\tau)\gamma(\tau)^{j},\ \ \ \ \forall j\tau \leq t \leq (j+1)\tau ,\ 0\leq j\leq m-1.
\end{equation*}
We denote the corresponding Maslov-type index of $\gamma^{m}$ on $[0, m\tau]$ by $(i_{m\tau}, \nu_{m\tau})\equiv (i_{m\tau}(\gamma^{m}), \nu_{m\tau}(\gamma^{m}))$.

\begin{proposition}[\cite{LC1}]\label{prop:1}
If $z$ is a $k\tau$-periodic solution of the system (\ref{1}), then $i_{k\tau}(j\ast z)=i_{k\tau}(z)$ and $\nu_{k\tau}(j\ast z)=\nu_{k\tau}(z)$ for all integers $0\leq j\leq k$.
\end{proposition}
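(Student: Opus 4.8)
The plan is to show that the phase-shifted solution $j\ast z$ gives rise to a symplectic path that is, up to a reparametrization and a symplectic conjugation, the "cyclically rotated'' version of the path associated with $z$, and then invoke the basic invariance properties of the Maslov-type index under such operations. First I would set $B(t)=H_{zz}^{\prime\prime}(t,z(t))$ and let $\gamma$ be the fundamental solution of $\dot y=JB(t)y$ on $[0,k\tau]$, so that $(i_{k\tau}(z),\nu_{k\tau}(z))=(i_{k\tau}(\gamma),\nu_{k\tau}(\gamma))$. For the shifted solution $w:=j\ast z$, i.e.\ $w(t)=z(t+j\tau)$, we have $H_{zz}^{\prime\prime}(t,w(t))=H_{zz}^{\prime\prime}(t+j\tau,z(t+j\tau))=B(t+j\tau)=:B_{j}(t)$, using the $T$-periodicity hypothesis (H1) (note $\tau$ is a multiple of $T$, or $\tau=T$, so $B$ is $\tau$-periodic), hence $B_j(t)=B(t+j\tau)$ is $k\tau$-periodic. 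Let $\gamma_{j}$ denote the fundamental solution of $\dot y=JB_{j}(t)y$; then $(i_{k\tau}(w),\nu_{k\tau}(w))=(i_{k\tau}(\gamma_{j}),\nu_{k\tau}(\gamma_{j}))$, and it remains to prove $(i_{k\tau}(\gamma_{j}),\nu_{k\tau}(\gamma_{j}))=(i_{k\tau}(\gamma),\nu_{k\tau}(\gamma))$.

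The key computation is to express $\gamma_{j}$ in terms of $\gamma$. By uniqueness of solutions of linear ODEs, one checks that $\gamma_{j}(t)=\gamma(t+j\tau)\gamma(j\tau)^{-1}$ for $t\in[0,k\tau]$: indeed the right-hand side at $t=0$ equals $I$, and differentiating gives $JB(t+j\tau)\gamma(t+j\tau)\gamma(j\tau)^{-1}=JB_{j}(t)\gamma_{j}(t)$. Since $\gamma$ is $k\tau$-periodic as a solution of a $k\tau$-periodic Hamiltonian system in the sense that $\gamma(t+k\tau)=\gamma(t)\gamma(k\tau)$, we get in particular $\gamma_{j}(k\tau)=\gamma((k+j)\tau)\gamma(j\tau)^{-1}=\gamma(j\tau)\gamma(k\tau)\gamma(j\tau)^{-1}$, which is conjugate to $\gamma(k\tau)$; this immediately yields $\nu_{k\tau}(\gamma_{j})=\dim\ker(\gamma_{j}(k\tau)-I)=\dim\ker(\gamma(k\tau)-I)=\nu_{k\tau}(\gamma)$. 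For the index part $i_{k\tau}$, I would appeal to the homotopy invariance and the loop/rotation property of the Maslov-type index from \cite{LL2,LO}: the path $\gamma_{j}$ is obtained from $\gamma$ by a cyclic change of base point on the circle $S_{k\tau}$ together with conjugation by the fixed matrix $\gamma(j\tau)\in Sp(2n)$, and both operations preserve $i_{k\tau}$. Concretely, one can use the standard fact (see \cite{LL2}) that for a $k\tau$-periodic path the index does not depend on the choice of starting time modulo $k\tau$, applied after conjugating to normalize $\gamma_j(0)=I$.

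The main obstacle is the bookkeeping in the last step: one must be careful that the "change of base point'' argument is legitimate for the Maslov-type index, which is defined only for paths starting at $I$, so the cyclic rotation has to be combined with the conjugation by $\gamma(j\tau)$ precisely so that the rotated path again starts at the identity, and then one must cite (or reprove via a continuous family $B_{sj}(t):=B(t+sj\tau)$, $s\in[0,1]$, and homotopy invariance of $(i_{k\tau},\nu_{k\tau})$ along paths with constant nullity — or, when the nullity jumps, the stability $i_{k\tau}+\nu_{k\tau}$ and lower semicontinuity of $i_{k\tau}$) that the integer $i_{k\tau}$ is unchanged. Since $j\ast z$ is genuinely a $k\tau$-periodic solution of (\ref{1}) for $0\le j\le k$, all these objects are well-defined, and the result follows. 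This is exactly the argument in \cite{LC1}, to which the statement attributes the proposition.
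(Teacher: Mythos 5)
Your argument is correct, and there is nothing in the paper to compare it against line by line: the paper states this proposition as a quotation from \cite{LC1} and gives no proof, so the relevant benchmark is the standard argument in \cite{LC1}/\cite{LL2,LO}, which is exactly the one you outline (linearize along $j\ast z$, observe $\gamma_{j}(t)=\gamma(t+j\tau)\gamma(j\tau)^{-1}$, get conjugacy of the end matrices for the nullity, and a deformation argument for the index). Two small points would tighten your write-up. First, the ``standard fact that the index does not depend on the starting time'' is essentially the proposition itself, so you should not cite it as a black box; your homotopy $B_{s}(t)=B(t+sj\tau)$, $s\in[0,1]$, is the real content and should carry the proof. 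Second, your worry about the nullity jumping along that homotopy is unnecessary: the same computation you did at $s=1$ gives $\gamma_{s}(k\tau)=\gamma(sj\tau)\gamma(k\tau)\gamma(sj\tau)^{-1}$ for every $s$ (using $\gamma(t+k\tau)=\gamma(t)\gamma(k\tau)$, valid since $B$ is $k\tau$-periodic), so the end matrices stay in a single conjugacy class, $\nu_{k\tau}(\gamma_{s})$ is constant in $s$, and the basic homotopy invariance of the Maslov-type index under homotopies with constant nullity applies directly; the fallback via $i_{k\tau}+\nu_{k\tau}$ and lower semicontinuity (which by itself would only give inequalities, not equality) is not needed. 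With that adjustment your proposal is a complete proof and coincides in substance with the cited one.
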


\begin{proposition}[\cite{LC1}]\label{prop:2}
For $m\in\N$, there holds
\begin{equation*}
m(i_{\tau}+\nu_{\tau}-n)-n\leq i_{m\tau}\leq m(i_{\tau}+n)+n-\nu_{m\tau}.
\end{equation*}
\end{proposition}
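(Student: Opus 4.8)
The plan is to deduce both inequalities from two standard facts of Long's iteration theory for the Maslov-type index, which I would quote from \cite{LL2} and \cite{LO}: the existence of the mean index and the fundamental iteration estimate attached to it. Everything beyond those two facts is elementary arithmetic.

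Concretely, for $\gamma\in\mathcal P_{\tau}(2n)$ one introduces the $\omega$-index pair $(i_\omega(\gamma),\nu_\omega(\gamma))$ for $\omega$ on the unit circle $\mathbb U$ (with $(i_1(\gamma),\nu_1(\gamma))=(i_\tau,\nu_\tau)$) and the mean index
\[
\hat{i}(\gamma)\;=\;\lim_{m\to\infty}\frac{i_{m\tau}(\gamma^m)}{m},
\]
whose existence is itself part of the theory. A convenient route to both this existence and the estimates below is the Bott-type iteration formula
\[
i_{m\tau}(\gamma^m)=\sum_{j=0}^{m-1}i_{\omega_j}(\gamma),\qquad \nu_{m\tau}(\gamma^m)=\sum_{j=0}^{m-1}\nu_{\omega_j}(\gamma),\qquad \omega_j=e^{2\pi\sqrt{-1}\,j/m},
\]
together with the fact that $\omega\mapsto i_\omega(\gamma)$ is a step function on $\mathbb U$ whose jumps are controlled by the splitting numbers $S^{\pm}_{\gamma(\tau)}$. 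From this machinery one extracts, for every $m\in\N$,
\begin{equation}\label{eq:prop2star}
m\,\hat{i}(\gamma)-n\;\le\;i_{m\tau}(\gamma^m)\qquad\text{and}\qquad i_{m\tau}(\gamma^m)+\nu_{m\tau}(\gamma^m)\;\le\;m\,\hat{i}(\gamma)+n .
\end{equation}

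Granting \eqref{eq:prop2star}, I would first specialize it to $m=1$, where $\gamma^1=\gamma$ and the index pair is $(i_\tau,\nu_\tau)$; this gives $\hat{i}(\gamma)-n\le i_\tau$ and $i_\tau+\nu_\tau\le\hat{i}(\gamma)+n$, that is,
\[
i_\tau+\nu_\tau-n\;\le\;\hat{i}(\gamma)\;\le\;i_\tau+n .
\]
Substituting this two-sided bound on $\hat{i}(\gamma)$ back into \eqref{eq:prop2star} yields the proposition at one stroke: for the lower bound,
\[
i_{m\tau}\;\ge\;m\,\hat{i}(\gamma)-n\;\ge\;m\,(i_\tau+\nu_\tau-n)-n ,
\]
and for the upper bound, from $i_{m\tau}+\nu_{m\tau}\le m\,\hat{i}(\gamma)+n\le m\,(i_\tau+n)+n$ one obtains $i_{m\tau}\le m\,(i_\tau+n)+n-\nu_{m\tau}$, exactly as claimed.

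The only genuine content is the iteration estimate \eqref{eq:prop2star}, and establishing it is the step I would expect to be the main obstacle in any self-contained treatment: it rests on the homotopy classification of symplectic matrices into basic normal forms, the definition and elementary properties of the splitting numbers $S^{\pm}_M(\omega)$ (in particular $0\le S^{\pm}_M(\omega)\le\dim_{\C}\ker_{\C}(M-\omega I)$, together with their vanishing for $\omega\notin\spec(M)$), and the additivity of the index under direct sums and concatenations of symplectic paths. All of this is developed in \cite{LL2} and \cite{LO}, so in the paper I would simply cite it and present only the short reduction above, as is done in \cite{LC1}.
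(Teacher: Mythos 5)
Your argument is correct, but it does not (and could not) track the paper's own treatment, because the paper gives no proof of this proposition at all: it is quoted verbatim from \cite{LC1}, where it was established within the Maslov-type index/Morse index framework rather than deduced the way you do. Your route is a legitimate alternative: the two mean-index estimates $m\hat{i}(\gamma)-n\le i_{m\tau}(\gamma^m)$ and $i_{m\tau}(\gamma^m)+\nu_{m\tau}(\gamma^m)\le m\hat{i}(\gamma)+n$ are indeed theorems of the iteration theory in \cite{LL2} and \cite{LO}, the specialization to $m=1$ gives exactly $i_\tau+\nu_\tau-n\le\hat{i}(\gamma)\le i_\tau+n$, and the substitution back is elementary and airtight (note your lower bound even yields the slightly stronger $m(i_\tau+\nu_\tau-n)-n$ with room to spare). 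What your approach buys is a clean separation of the analytic content (Bott-type formula, splitting numbers, existence of $\hat{i}$) from trivial arithmetic; what it costs is importing the whole $\omega$-index machinery for a statement that, in this paper, follows even more cheaply from material already quoted: Proposition \ref{prop:3} (from \cite{LL2}) gives $m(i_\tau+\nu_\tau-n)+n-\nu_\tau\le i_{m\tau}\le m(i_\tau+n)-n-(\nu_{m\tau}-\nu_\tau)$, and since $0\le\nu_\tau\le 2n$ one has $n-\nu_\tau\ge -n$ and $-n+\nu_\tau\le n$, so Proposition \ref{prop:2} drops out immediately without any mention of the mean index. If you keep your version, the only point to be careful about is to cite the mean-index inequalities in exactly the form you use them (they appear as the ``common index inequalities'' in \cite{LO}), since your entire proof rests on that quoted input.
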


\begin{proposition}[\cite{LL2}]\label{prop:3}
For $m\in\N$, there holds
\begin{equation*}
m(i_{\tau}+\nu_{\tau}-n)+n-\nu_{\tau}\leq i_{m\tau}\leq m(i_{\tau}+n)-n-(\nu_{m\tau}-\nu_{\tau}).
\end{equation*}
\end{proposition}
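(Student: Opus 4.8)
The plan is to derive the sharpened iteration inequality from the Bott-type iteration formula for the Maslov-type index together with the basic normal-form theory of symplectic paths, exactly as in \cite{LL2}. First I would reduce to the case $m=1$ being trivial and fix $\gamma\in\mathcal P_{\tau}(2n)$ with end matrix $M=\gamma(\tau)$. Using the homotopy invariance of the Maslov-type index and the symplectic normal form, decompose $M$ into its basic blocks: the hyperbolic part, the part with eigenvalue $\pm1$ carrying Jordan blocks, and the elliptic part with eigenvalues on the unit circle off $\pm1$. Each of these contributes independently to $i_{\tau}$, $\nu_{\tau}$ and, after iteration, to $i_{m\tau}$, $\nu_{m\tau}$, so it suffices to prove the two-sided estimate block by block and then sum.

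Next I would recall the $\omega$-index (Bott) functions $i_{\omega}(\gamma)$ and $\nu_{\omega}(\gamma)$ for $\omega$ on the unit circle, and the iteration identity
\begin{equation*}
i_{m\tau}(\gamma^{m})=\sum_{\omega^{m}=1} i_{\omega}(\gamma),\qquad
\nu_{m\tau}(\gamma^{m})=\sum_{\omega^{m}=1} \nu_{\omega}(\gamma).
\end{equation*}
The key analytic input is the pair of inequalities comparing $i_{\omega}(\gamma)$ with $i_{1}(\gamma)=i_{\tau}$: for every $\omega$ one has
\begin{equation*}
i_{\tau}+\nu_{\tau}-n\ \le\ i_{\omega}(\gamma)+\nu_{\omega}(\gamma)\ \le\ i_{\tau}+n ,
\end{equation*}
with the refinement that for $\omega\neq1$ the nullity $\nu_{\omega}(\gamma)$ is controlled by the splitting numbers, so that $i_{\omega}(\gamma)\ge i_{\tau}+\nu_{\tau}-n$ can be upgraded using the fact that the terms with $\omega\neq 1$ do not ``lose'' the full $\nu_{\tau}$. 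Summing the lower bound over the $m$ roots of unity and isolating the $\omega=1$ term (which contributes exactly $i_{\tau}+n-n=i_{\tau}$, i.e. the extra $n-\nu_{\tau}$ relative to the generic bound because $\nu_1=\nu_{\tau}$) yields
\begin{equation*}
i_{m\tau}\ \ge\ (m-1)(i_{\tau}+\nu_{\tau}-n)+i_{\tau}\ =\ m(i_{\tau}+\nu_{\tau}-n)+n-\nu_{\tau}.
\end{equation*}
For the upper bound, summing $i_{\omega}(\gamma)\le i_{\tau}+n-\nu_{\omega}(\gamma)$ over $\omega^{m}=1$ and using $\sum_{\omega^{m}=1}\nu_{\omega}(\gamma)=\nu_{m\tau}$ together with separating the $\omega=1$ term (contributing $\nu_1=\nu_{\tau}$) gives $i_{m\tau}\le m(i_{\tau}+n)-n-(\nu_{m\tau}-\nu_{\tau})$, which is the asserted right-hand inequality.

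The main obstacle is the sharp constant bookkeeping: one must verify the refined single-step estimates $i_{\tau}+\nu_{\tau}-n\le i_{\omega}+\nu_{\omega}\le i_{\tau}+n$ with the correct behavior of the $\omega=1$ term, and this is where the normal-form classification of $Sp(2n)$ and the monotonicity/splitting-number properties of the $\omega$-index are essential. Once the per-block inequalities are established with the right endpoints, the summation over $m$-th roots of unity is routine. Since this proposition is quoted from \cite{LL2}, in the paper I would simply state it and refer the reader there; the sketch above indicates the route a self-contained argument would follow.
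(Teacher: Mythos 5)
The paper gives no proof of this proposition; it is quoted directly from \cite{LL2}, which is also what you propose to do in the end. Your sketch follows essentially the same route as the cited source (Bott-type formula $i_{m\tau}=\sum_{\omega^{m}=1}i_{\omega}(\gamma)$ together with the per-$\omega$ estimates $i_{\tau}+\nu_{\tau}-n\le i_{\omega}(\gamma)$ and $i_{\omega}(\gamma)+\nu_{\omega}(\gamma)\le i_{\tau}+n$ for $\omega\neq1$, then summation), so it is correct and consistent with how the paper handles the statement.
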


\begin{proposition}[\cite{AA}]\label{prop:4}
Let $B(t)\in C(\R, \mathfrak{L}_{s}(\R^{2n}))$ be $\tau$-periodic and positive definite for all $t\in [0, \tau]$. Suppose that $B(t_{0})$ is strictly positive for some $t_{0}\in [0, \tau]$. Then $i_{\tau}(B)\geq n$.

\end{proposition}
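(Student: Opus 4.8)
The plan is to reduce the estimate to the variational characterization of the Maslov-type index. On $E=W^{1/2,2}(S_\tau,\R^{2n})$ I would work with the quadratic form $\Phi_B(z)=\frac12\int_0^\tau\big(-\langle J\dot z,z\rangle-\langle B(t)z,z\rangle\big)\,dt$ and with the bounded self-adjoint operator $L_B$ on $E$ representing it, i.e. $\langle L_Bz,w\rangle_E=\int_0^\tau\big(-\langle J\dot z,w\rangle-\langle B(t)z,w\rangle\big)\,dt$. Write $E^0$ for the space of constant loops ($=\ker(-J\frac{d}{dt})$, of dimension $2n$), $E^-$ for the negative eigenspace of $L_0$ (the case $B\equiv 0$), and $E_N$ for the Galerkin subspace spanned by the Fourier modes with $|k|\le N$, with orthogonal projection $P_N$ and $\dim(E^-\cap E_N)=2nN$. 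The only nontrivial ingredient I would invoke is the standard dictionary between the Maslov-type and Morse indices (see \cite{LL2,LO}): for all large $N$ one has $m^-(P_NL_BP_N)=2nN+n+i_\tau(B)$ and $m^0(P_NL_BP_N)=\nu_\tau(B)$. (The universal constant $2nN+n$ is pinned down by testing the identity on $B=\varepsilon I$ with $\varepsilon>0$ small, where $i_\tau=n$ while a direct computation gives $m^-=2nN+2n$; this is also the place to recheck the normalization against \cite{LL2,LO}, since an off-by-$n$ there --- equivalently $i_\tau(0)=-n$ --- is the obvious trap.)

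First I would record that $\bar B:=\int_0^\tau B(t)\,dt$ is positive definite. Since $B(t_0)$ is positive definite and $B$ is continuous, there are $\delta,c>0$ with $B(t)\ge cI$ for $|t-t_0|\le\delta$; as $B(t)\ge 0$ for every $t$, the integrand $\langle B(t)v,v\rangle$ is nonnegative, so $\langle\bar Bv,v\rangle\ge 2\delta c\,|v|^2>0$ for $v\ne 0$.

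Then I would exhibit a subspace of $E_N$ of dimension $2nN+2n$ on which $L_B$ is negative definite, namely $V:=(E^-\cap E_N)\oplus E^0$. For $z=z^-+z^0\in V$ one has $\langle L_Bz,z\rangle_E=\langle L_0z,z\rangle_E-\int_0^\tau\langle B(t)z,z\rangle\,dt\le\langle L_0z,z\rangle_E$ because $B\ge 0$; moreover $L_0z^0=0$ and $E^-$, $E^0$ are $L_0$-orthogonal, so $\langle L_0z,z\rangle_E=\langle L_0z^-,z^-\rangle_E$, which is strictly negative unless $z^-=0$; and if $z^-=0$ and $z^0\ne 0$, then $\langle L_Bz^0,z^0\rangle_E=-\langle\bar Bz^0,z^0\rangle<0$ by the previous paragraph. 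Since $V\subset E_N$, this forces $m^-(P_NL_BP_N)\ge\dim V=2nN+2n$, and combining with the index formula gives $i_\tau(B)=m^-(P_NL_BP_N)-2nN-n\ge n$.

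The only place any real content enters is the Morse-index/Maslov-index identity on the Galerkin spaces; the rest is linear algebra. Each hypothesis is used once: positive semidefiniteness of $B$ yields $\Phi_B\le\Phi_0$, so the $2nN$ genuinely negative directions of $L_0$ are retained; strict positivity of $B$ at a single point forces $\bar B>0$ and hence pushes the whole $2n$-dimensional kernel $E^0$ into the negative cone, and this surplus of $2n$ over the ``$+n$'' in the index formula is exactly the asserted bound. I expect the main (in fact essentially the only) obstacle to be bookkeeping the normalization of the index/Morse-index correspondence correctly; the check on $B=\varepsilon I$ is the quick way to settle it.
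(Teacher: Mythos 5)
The paper does not prove this proposition at all (it is quoted from Abbondandolo's book), so your attempt can only be judged on its own merits. Your overall strategy is the natural one, and two of your steps are fine: $\bar B=\int_0^\tau B(t)\,dt$ is positive definite, and $L_B=A-B$ is strictly negative on every nonzero vector of $V=(E^-\cap E_N)\oplus E^0$, so the naive Morse index of $P_NL_BP_N$ is at least $2nN+2n$. The gap is the "dictionary" you invoke: the exact identities $m^-(P_NL_BP_N)=2nN+n+i_\tau(B)$ and $m^0(P_NL_BP_N)=\nu_\tau(B)$ are not the standard Galerkin statement and are not true in general. What the Fei--Qiu approximation theorem (the version quoted in Section~2 of this paper) gives is a spectral-gap statement: for $0<d\le\frac14\Vert(A-B)^\sharp\Vert^{-1}$ and $N$ large, exactly $2nN+n+i_\tau$ eigenvalues of $P_N(A-B)P_N$ lie in $(-\infty,-d]$ and exactly $\nu_\tau$ lie in $(-d,d)$; nothing pins down the signs of those $\nu_\tau$ small eigenvalues, which for a finite truncation need not vanish and may well be negative. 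With the correct statement, your pointwise negativity on $V$ only yields $2nN+2n\le m^-\le 2nN+n+i_\tau+\nu_\tau$, i.e. $i_\tau+\nu_\tau\ge n$, which is strictly weaker than the claim. This matters precisely in the degenerate case $\nu_\tau\ge1$, which is the relevant one in this paper's application (Proposition~\ref{prop:5} is used together with $\nu_{T/k}(z)\ge1$); your normalization check on $B=\varepsilon I$ cannot detect the problem because that path is nondegenerate.

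The argument is repairable, but it needs one more estimate: negativity on $V$ that is \emph{uniform in $N$}. If you show $\langle L_Bz,z\rangle\le-\epsilon\Vert z\Vert^2$ for all $z\in V$ with $\epsilon>0$ independent of $N$, then by Courant--Fischer at least $\dim V=2nN+2n$ eigenvalues of $P_NL_BP_N$ are $\le-\epsilon$, and applying the quoted theorem with $d\le\min\{\epsilon,\frac14\Vert(A-B)^\sharp\Vert^{-1}\}$ gives $2nN+n+i_\tau\ge 2nN+2n$, i.e. $i_\tau(B)\ge n$, with no interference from the $\nu_\tau$ undetermined eigenvalues. The uniform bound does hold for your $V$: write $z=z^-+z^0$ and fix a small $\delta>0$ depending only on $c_0$ (where $\bar B\ge c_0I$), $\sup_t\vert B(t)\vert$ and the $L^2$-embedding constant. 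If $\Vert z^-\Vert\le\delta\vert z^0\vert$, Cauchy--Schwarz for the semidefinite form $\int_0^\tau\langle B(t)\cdot,\cdot\rangle\,dt$ gives $\int_0^\tau\langle B(t)z,z\rangle\,dt\ge\frac{c_0}{4}\vert z^0\vert^2$, hence $\langle L_Bz,z\rangle\le-\frac{c_0}{4}\vert z^0\vert^2\le-c'\Vert z\Vert^2$; if $\Vert z^-\Vert>\delta\vert z^0\vert$, then $\langle L_Bz,z\rangle\le\langle Az^-,z^-\rangle\le-c''\Vert z\Vert^2$, since $A$ is negative definite on $E^-$ with an $N$-independent constant and $B\ge0$. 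As written, however, the key identity you rely on is unjustified and the final inequality does not follow from what you proved.
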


\begin{proposition}[\cite{dolong}]\label{prop:5}
Let $B(t)\in C(\R, \mathfrak{L}_{s}(\R^{2n}))$ be $\tau$-periodic. Suppose there exists some $m\in\N$ such that $i_{m\tau}(B)\leq n+1$, $i_{\tau}(B)\geq n$ and $\nu_{\tau}(B)\geq 1$. Then $m=1$.
\end{proposition}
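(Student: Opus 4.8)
The plan is to argue by contradiction. Let $\gamma\in\mathcal P_\tau(2n)$ be the fundamental solution of $\dot y=JB(t)y$, so that $(i_\tau,\nu_\tau)$ and $(i_{m\tau},\nu_{m\tau})$ are the Maslov-type indices of $\gamma$ and of its $m$-th iterate $\gamma^m$. Assume $m\ge2$; the goal is to reach $i_{m\tau}\ge n+2$, contradicting the hypothesis $i_{m\tau}\le n+1$.

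First I would feed the hypotheses $i_\tau\ge n$ and $\nu_\tau\ge1$ into the iteration inequality of Proposition \ref{prop:3}. Set $d:=i_\tau+\nu_\tau-n$; then $i_\tau\ge n$ is equivalent to $\nu_\tau\le d$, while $d\ge1$ because $i_\tau\ge n$ and $\nu_\tau\ge1$. Proposition \ref{prop:3} now gives
\[
i_{m\tau}\ \ge\ md+n-\nu_\tau\ \ge\ md+n-d\ =\ (m-1)d+n\ \ge\ (m-1)+n ,
\]
so the hypothesis $i_{m\tau}\le n+1$ already forces $m\le2$. If in addition $m=2$, then every inequality in this chain must be an equality, which pins down $d=1$, hence $\nu_\tau=1$ and $i_\tau=n$, and also $i_{2\tau}=n+1$. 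Thus everything comes down to excluding this one borderline configuration.

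To rule out $m=2$ I would use the precise (Bott-type) iteration formula together with the splitting-number calculus of Liu and Long. Since $\nu_\tau=\dim\ker(\gamma(\tau)-I)=1$, up to symplectic conjugacy the eigenvalue $1$ of $\gamma(\tau)$ is carried by a single basic normal form $N_1(1,\varepsilon)$ with $\varepsilon\in\{+1,-1\}$, while the complementary symplectic factor $M_1$ has no eigenvalue $1$. The Bott-type formula gives $i_{2\tau}(\gamma^2)=i_\tau(\gamma)+i_{\tau,-1}(\gamma)$, where $i_{\tau,-1}$ is the $(-1)$-index; the increment $i_{\tau,-1}(\gamma)-i_\tau(\gamma)$ is governed by the splitting numbers $S^{\pm}_{\gamma(\tau)}(1)$ and by the spectrum of $M_1$ on the arc of the unit circle joining $1$ to $-1$. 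Combining this with the parity that the $N_1(1,\varepsilon)$-block imposes on $i_\tau(\gamma)$ and with the equality $i_\tau(\gamma)=n$, one obtains $i_{2\tau}(\gamma^2)\ge n+2$, which contradicts $i_{2\tau}=n+1$. Hence $m\neq2$, and therefore $m=1$.

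I expect the genuinely delicate point to be this last step: the reduction to $m\le2$ is elementary, whereas ruling out the borderline case $i_\tau=n$, $\nu_\tau=1$, $i_{2\tau}=n+1$ requires the full refined iteration theory — in particular one must exploit the constraint $i_\tau(\gamma)=n$ to control the contribution of $M_1$ to $i_{\tau,-1}(\gamma)$, a crude bound on that contribution being insufficient. An essentially equivalent but more streamlined route is to invoke a sharpened form of the iteration inequality in Proposition \ref{prop:3} that becomes strict for every $m\ge2$ whenever $i_\tau\ge n$ and $\nu_\tau\ge1$.
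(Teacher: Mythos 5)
The paper never proves this proposition: it is quoted from the reference of Dong and Long, so the only question is whether your argument stands on its own. Your first step does. Feeding $i_\tau\ge n$ and $\nu_\tau\ge 1$ into Proposition \ref{prop:3} gives $i_{m\tau}\ge (m-1)(i_\tau+\nu_\tau-n)+n\ge n+m-1$, so $i_{m\tau}\le n+1$ forces $m\le 2$, and $m=2$ pins down the borderline data $i_\tau=n$, $\nu_\tau=1$, $i_{2\tau}=n+1$ exactly as you say. It is also worth noting that in the only place this paper uses Proposition \ref{prop:5} (the proof of Theorem \ref{the:7}) one actually has $i_T(z)\le n$ rather than $\le n+1$, and then your first display alone already yields $m=1$; the borderline case never arises in this application.

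The genuine gap is the exclusion of $m=2$, which is the whole content of the quoted result beyond Proposition \ref{prop:3}, and which you assert rather than prove. Writing $i_{2\tau}=i_\tau+i_{\tau,-1}$, you must show $i_{\tau,-1}\ge 2$ when $i_\tau=n$ and $\nu_\tau=1$; the drop $i_{\tau,-1}-i_\tau$ equals $S^+_{\gamma(\tau)}(1)+\sum_{\theta\in(0,\pi)}\bigl(S^+-S^-\bigr)(e^{i\theta})-S^-_{\gamma(\tau)}(-1)$, and the crude bound gives only $i_{\tau,-1}\ge i_\tau+\nu_\tau-n=1$. To improve this to $2$ one needs two facts that are absent from your sketch: (i) the total drop can reach $n-1$ only if every eigenvalue of the factor complementary to the eigenvalue-one block lies on the unit circle and realizes the extremal splitting numbers there; and (ii) the parity of $i_\tau$ is an invariant of the conjugacy class of $\gamma(\tau)$ (changing the path changes the index by even integers), computable blockwise, and in that extremal configuration it equals $n-1$ modulo $2$, contradicting $i_\tau=n$. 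The parity ``imposed by the $N_1(1,\varepsilon)$-block,'' which is what you invoke, does not determine the parity of $i_\tau$ once $n\ge 2$, so as worded your parity appeal cannot close the case; moreover $\nu_\tau=1$ does not force the eigenvalue-one part to be a $2\times 2$ block $N_1(1,\varepsilon)$ (a single even-sized Jordan block of size at least $4$ is symplectically possible), although that case only makes the estimate easier. So your strategy points at the right machinery and the inequality $i_{2\tau}\ge n+2$ is indeed what the refined $\omega$-index/splitting-number theory delivers, but as written the decisive step is an appeal to ``the full refined iteration theory'' rather than an argument: you would need to state and use the splitting-number bounds and the equality characterization of the $\omega$-index inequalities (Long's index theory book, or Liu--Long \cite{LL2}), or else cite the precise statement of \cite{dolong} and not prove it at all.
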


As is in \cite{FPL}, by making change of variables $\varsigma=\frac{t}{\alpha}$ with $\alpha=\frac{T}{2\pi}$, seeking for $T$-periodic solutions of the system (\ref{1}) diverts to searching for $2\pi$-periodic solutions of the system
\begin{equation}
\left\{
\begin{array}{l}
\dot{p}=-\alpha H_{q}^{\prime}(\alpha\varsigma, z),\\
\dot{q} = \alpha H_{p}^{\prime}(\alpha\varsigma, z).
\end{array}\right.
\end{equation}
Hence we can focus our attention on $2\pi$-periodic solutions of the system (\ref{1}). In the following we always assume $\tau=2\pi$.

\bigskip
Now we introduce some concepts and conclusions which are used later.
For $S_{\tau}=\R/\tau\Z$, let $E=W^{\frac{1}{2}, 2}(S_{\tau}, \R^{2n})$. Recall that $E$ consists of all the elements $z\in L^{2}(S_{\tau}, \R^{2n})$ satisfying
\begin{equation*}
\begin{split}
z(t)&=\sum_{j\in\Z}\exp(\frac{2j\pi t}{\tau}J)a_{j}, a_{j}\in\R^{2n},\\
\Vert z\Vert^{2}&=\tau\vert a_{0}\vert^{2}+\tau\sum_{j\in\Z}\vert j\vert\vert a_{j}\vert^{2}<+\infty.
\end{split}
\end{equation*}
The inner product in $E$ is given by
\begin{equation*}
\langle z_{1}, z_{2}\rangle=\tau a_{0}^{1}\cdot a_{0}^{2}+\tau\sum_{j\in\Z}\vert j\vert a_{j}^{1}\cdot a_{j}^{2}\ \ \text{for}\ z_{k}=\sum_{j\in\Z}\exp(\frac{2j\pi t}{\tau}J)a_{j}^{k}, k=1,2.
\end{equation*}

\begin{lemma}[\cite{Rab4}]\label{lem:1}
For each $s\in [1, +\infty)$, $E$ is compactly embedded in $L^{s}(S_{\tau}, \R^{2n})$. In particular there is an $C_{s}>0$ such that $\Vert z\Vert_{L^{s}}\leq C_{s}\Vert z\Vert$ for all $z\in E$.
\end{lemma}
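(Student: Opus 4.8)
The plan is to argue directly via Fourier series, exploiting the explicit orthogonal decomposition that defines $E$. First I would treat the case $s = 2$ as a warm-up: for $z = \sum_j \exp(\tfrac{2j\pi t}{\tau}J)a_j$ one has $\|z\|_{L^2}^2 = \tau\sum_j |a_j|^2$, and since $\|z\|^2 = \tau|a_0|^2 + \tau\sum_j |j||a_j|^2 \geq \tau\sum_j |a_j|^2$ once we note $|j| \geq 1$ for $j \neq 0$ (and the $j=0$ term matches), the bound $\|z\|_{L^2} \leq \|z\|$ is immediate. For general finite $s \in [1,\infty)$ the continuous embedding $W^{1/2,2} \hookrightarrow L^s$ in dimension one is classical; the clean self-contained route is to first establish it for even integers $s = 2m$ by expanding $\|z\|_{L^{2m}}^{2m} = \int |z|^{2m}$, writing $|z|^{2m}$ as a sum over frequency $2m$-tuples, and estimating each term using $\sum_j \frac{1}{1+|j|} \cdot (1+|j|)|a_j|^2$ together with the Cauchy–Schwarz / Hölder inequality applied to the convolution structure of the Fourier coefficients of $|z|^2 = z\cdot\bar z$. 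For a non-even $s$ one then interpolates, or simply uses the monotonicity $\|z\|_{L^s} \leq (\text{const})\|z\|_{L^{s'}}$ on the finite measure space $S_\tau$ for $s \leq s'$ and bounds $\|z\|_{L^{s'}}$ with $s'$ the next even integer.

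For the compactness, the standard argument is to show that the embedding is a norm limit of finite-rank operators, hence compact. Let $P_N : E \to E$ be the truncation $P_N z = \sum_{|j| \leq N} \exp(\tfrac{2j\pi t}{\tau}J)a_j$, which has finite-dimensional range. For the tail $z - P_N z = \sum_{|j| > N} \exp(\cdot)a_j$, the same Fourier estimates as above but retaining the factor $\frac{1}{1+|j|} \leq \frac{1}{N}$ on the high frequencies yield $\|z - P_N z\|_{L^s} \leq \varepsilon(N)\|z\|$ with $\varepsilon(N) \to 0$. Thus the inclusion $E \hookrightarrow L^s$ is approximated in operator norm by the compact maps $P_N$ (composed with the $L^s$-inclusion, which on the finite-dimensional space is trivially bounded), so it is compact.

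The main obstacle is the general-$s$ continuous bound: controlling $\|z\|_{L^s}$ by the $W^{1/2,2}$-norm is the borderline Sobolev case (the critical exponent for $W^{1/2,2}$ in dimension $1$ is $\infty$, so every finite $s$ works, but there is no Hölder control and the estimate genuinely uses the $\ell^2$-summability of $(1+|j|)|a_j|^2$ against the $\ell^1$-summability of $(1+|j|)^{-1}$). Since this is a well-known fact — it is exactly the statement proved in Rabinowitz's lecture notes \cite{Rab4} — in the write-up I would simply cite \cite{Rab4} for the continuous embedding and compactness rather than reproduce the Fourier computation, and only spell out the elementary $s=2$ case to fix the constant normalization used later. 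If a self-contained proof were required, the even-integer-plus-interpolation scheme above is the cleanest; the routine but slightly tedious part is bookkeeping the multi-index sums in $\int|z|^{2m}$.
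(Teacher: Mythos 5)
Your bottom line matches the paper exactly: the paper offers no proof of this lemma at all, it simply imports it from Rabinowitz's CBMS notes \cite{Rab4}, so citing \cite{Rab4} (perhaps with the elementary $s=2$ computation, which you do correctly) is precisely what is done here.

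One caution about your self-contained sketch, though, because as stated it rests on a false claim: you say the estimate ``uses the $\ell^{2}$-summability of $(1+|j|)|a_{j}|^{2}$ against the $\ell^{1}$-summability of $(1+|j|)^{-1}$,'' but $\sum_{j\in\Z}(1+|j|)^{-1}$ diverges (and the weights $(1+|j|)|a_j|^2$ are $\ell^1$, not $\ell^2$). If that pairing were available you would in fact get $\sum_j|a_j|\leq\bigl(\sum_j(1+|j|)^{-1}\bigr)^{1/2}\bigl(\sum_j(1+|j|)|a_j|^{2}\bigr)^{1/2}<\infty$ and hence the embedding into $L^{\infty}$, which is exactly what fails in this borderline case. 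A correct elementary route is: write $|a_j|=(1+|j|)^{-1/2}b_j$ with $(b_j)\in\ell^{2}$, apply H\"older with exponents $\tfrac{2}{q}$ and $\tfrac{2}{2-q}$ to conclude $(a_j)\in\ell^{q}$ for every $q>1$ (since $\sum_j(1+|j|)^{-q/(2-q)}<\infty$ precisely when $q>1$), and then use Hausdorff--Young to land in $L^{s}$ for every finite $s$; alternatively interpolate between $L^{2}$ and the even-integer cases handled by a genuine convolution estimate rather than the divergent-series pairing. Your compactness argument is then fine: the tail $z-P_{N}z$ gains a factor $N^{-1/2}$ in $L^{2}$, and interpolating this with the continuous embedding into some $L^{s'}$, $s'>s$, gives $\Vert z-P_{N}z\Vert_{L^{s}}\leq \varepsilon(N)\Vert z\Vert$ with $\varepsilon(N)\to 0$, so the embedding is a norm limit of finite-rank maps. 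In short: the citation is the right call and matches the paper; just do not rely on the summability of $(1+|j|)^{-1}$ if you ever write the argument out.
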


Let $\mathfrak{L}_{s}(E)$ and  $\mathfrak{L}_{c}(E)$ denote the spaces of the bounded self-adjoint linear operator and compact linear operator on $E$ respectively. For $B(t)\in C(S_{\tau}, \mathfrak{L}_{s}(\R^{2n}))$, we define two operators $A$, $B\in \mathfrak{L}_{s}(E)$ by extending the bilinear forms:
\begin{equation}\label{3}
\langle Ax, y \rangle=\int_{0}^{\tau}-J\dot{x}(t)\cdot y(t)dt,\ \ \langle Bx, y\rangle= \int_{0}^{\tau}B(t)x(t)\cdot y(t)dt.
\end{equation}
on $E$. Then $\ker A=\R^{2n}$, the Fredholm index $\ind A=0$ and $B\in\mathfrak{L}_{c}(E)$. Using the Floquet theory, we have
\begin{equation*}
\nu_{\tau}=\dim\ker(A-B).
\end{equation*}
For $m \in \N$, set $E^{0}=\R^{2n}$,
\begin{equation*}
\begin{split}
E_{m}&=\sum_{j=-m}^{m}\exp(\frac{2j\pi t}{\tau}J)\R^{2n},\\
E^{\pm}&=\sum_{\pm j>0}\exp(\frac{2j\pi t}{\tau}J)\R^{2n},
\end{split}
\end{equation*}
and $E_{m}^{+}=E_{m}\cap E^{+}$, $E_{m}^{-}=E_{m}\cap E^{-}$. Obviously, $E=E^{+}\oplus E^{0}\oplus E^{-}$ and $E_{m}=E_{m}^{+}\oplus E^{0}\oplus E_{m}^{-}$. It is easy to check that $E^{+}$, $E^{0}$, $E^{-}$ are respectively the subspaces of $E$ on which $A$ is positive definite, null, and negative definite, and these spaces are orthogonal with respect to $A$. For $z=z^{+}+z^{0}+z^{-}$ with $z^{\pm}\in E^{\pm}$ and $z^{0}\in E^{0}$, we have $\langle Az, z\rangle=\langle Az^{+}, z^{+}\rangle+\langle Az^{-}, z^{-}\rangle$ and $\Vert z\Vert^{2}=\vert z^{0}\vert^{2}+\frac{1}{2}(\langle Az^{+}, z^{+}\rangle-\langle Az^{-}, z^{-}\rangle)$.

Let $P_{0}$ be the orthogonal projection from $E$ to $E^{0}$ and $P_{m}$ be the orthogonal projection from $E$ to $E_{m}$ for $m\in\N$. Then $\{P_{m}\}_{m=0}^{\infty}$ is a Galerkin approximation sequence respect to $A$.

For $S\in \mathfrak{L}_{s}(E)$ and $d > 0$, we denote by $M_{d}^{+}(S)$, $M_{d}^{-}(S)$ and $M_{d}^{0}(S)$ the eigenspace corresponding to the eigenvalue belonging to $[d, +\infty)$, $(-\infty, -d]$ and$(-d, d)$, respectively, and denote by $M^{+}(S)$, $M^{0}(S)$, and $M^{-}(S)$, respectively, the positive, negative definite, and null subspace of $S$. Set $S^{\sharp} = (S\vert_{Im S})^{-1}$, and $P_{m}SP_{m}\equiv (P_{m}SP_{m})\mid_{E_{m}}: E_{m}\to E_{m}$.

In \cite{FQ2}, Fei and Qiu studied the relation between Maslov-type index and Morse index by Galerkin approximation method and got the following theorem.

\begin{theorem}[\cite{FQ2}]\label{the:2}
For $B(t)\in C(\R, \mathfrak{L}_{s}(\R^{2n}))$ with the Maslov-type index $(i_{\tau}, \nu_{\tau})$ and any constant $0<d\leq\frac{1}{4}\Vert (A - B)^{\sharp} \Vert^{-1}$, there exists an $m_{0}>0$ such that for $m\geq m_{0}$, there holds
\begin{equation}\label{35}
\begin{split}
\dim M_{d}^{+}(P_{m}(A - B)P_{m}) &=\frac{1}{2}\dim E_{m}-i_{\tau}-\nu_{\tau},\\
\dim M_{d}^{-}(P_{m}(A - B)P_{m}) &=\frac{1}{2}\dim E_{m}+i_{\tau},\\
\dim M_{d}^{0}(P_{m}(A - B)P_{m}) &=\nu_{\tau},
\end{split}
\end{equation}
where $B$ is the operator defined by (\ref{3}) corresponding to $B(t)$.
\end{theorem}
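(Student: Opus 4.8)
The plan is to follow the Galerkin-approximation method of Fei and Qiu. Write $S=A-B\in\mathfrak{L}_{s}(E)$. Since $B\in\mathfrak{L}_{c}(E)$, the operator $S$ is a compact perturbation of the Fredholm operator $A$, hence self-adjoint and Fredholm, so $0$ is either absent from $\sigma(S)$ or an isolated eigenvalue of finite multiplicity, and the Floquet description recalled above gives $\dim M^{0}(S)=\dim\ker(A-B)=\nu_{\tau}$. Moreover $\Vert(A-B)^{\sharp}\Vert^{-1}$ is the distance from $0$ to $\sigma(S)\setminus\{0\}$, so the hypothesis $0<d\le\tfrac14\Vert(A-B)^{\sharp}\Vert^{-1}$ forces $\sigma(S)\cap[-4d,4d]\subseteq\{0\}$, leaving a margin for the approximation error. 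The proof then splits into a functional-analytic stabilization step and an index-identification step.

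\emph{Step 1: stabilization.} Each Fourier mode is $A$-invariant, so $P_{m}AP_{m}=A|_{E_{m}}$ is diagonal, with eigenvalue $\tfrac{2j\pi}{\tau}$ of multiplicity $2n$ on the mode $j$; hence for $d<\tfrac{2\pi}{\tau}$ one has $\dim M_{d}^{\pm}(A|_{E_{m}})=2nm$ and $\dim M_{d}^{0}(A|_{E_{m}})=2n$, i.e.\ $\dim M_{d}^{-}(A|_{E_{m}})=\tfrac12\dim E_{m}-n$. For $v\in E_{m}$ the quadratic form of $P_{m}SP_{m}$ is the plain restriction $\langle Sv,v\rangle=\langle Av,v\rangle-\langle Bv,v\rangle$; since $B$ is compact, $\langle Bv,v\rangle$ is dominated by $\langle Av,v\rangle$ on all Fourier modes of sufficiently high frequency, so all but a bounded number of modes — a bound independent of $m$ — contribute robustly to the positive, respectively negative, subspace of $S|_{E_{m}}$. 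Consequently $\dim M_{d}^{-}(P_{m}SP_{m})-\dim M_{d}^{-}(A|_{E_{m}})$ stays in a bounded range as $m\to\infty$, and a weak-compactness argument pins its limit: were this quantity, or $\dim M_{d}^{0}(P_{m}SP_{m})$, larger than claimed along a subsequence, one would take unit eigenvectors $z_{m}\in E_{m}$ with eigenvalue in $[-2d,2d]$, pass to a weakly convergent subsequence, and use the compact embedding of Lemma~\ref{lem:1} together with the compactness of $B$ to produce a genuine eigenvector of $S$ with eigenvalue in $[-2d,2d]$, hence in $\ker S$, contradicting a dimension count. Thus for all large $m$ the numbers $\iota(S):=\dim M_{d}^{-}(P_{m}SP_{m})-\dim M_{d}^{-}(A|_{E_{m}})$ and $\dim M_{d}^{0}(P_{m}SP_{m})$ are independent of $m$, the latter equal to $\nu_{\tau}$ (the lower bound $\ge\nu_{\tau}$ coming from the test vectors $P_{m}z$, $z\in\ker S$, for which $\Vert P_{m}SP_{m}(P_{m}z)\Vert=\Vert P_{m}S(P_{m}z-z)\Vert\to0$). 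Since the three $M_{d}$-dimensions sum to $\dim E_{m}$, the three displayed formulas are together equivalent to the single identity $\iota(S)=i_{\tau}+n$.

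\emph{Step 2: identification.} Connect $B(t)$ to the reference $B_{\ast}\equiv0$ through the path $B_{s}(t)=sB(t)$, $s\in[0,1]$, in $C(\R,\mathfrak{L}_{s}(\R^{2n}))$, along which $A-B_{s}$ is non-invertible for only finitely many parameters. At the reference end, $P_{m}(A-B_{\ast})P_{m}=A|_{E_{m}}$ gives $\iota(A)=0$, while the constant symplectic path $\gamma\equiv I$ has Maslov-type index $i_{\tau}=-n$ in the convention used here, so $\iota(A-B_{\ast})=i_{\tau}(B_{\ast})+n$. Between consecutive degeneracies both $s\mapsto i_{\tau}(B_{s})$ and, by Step~1 with constants uniform on compact subintervals, $s\mapsto\iota(A-B_{s})$ are locally constant. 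The crux is to show they jump by the same integer at each degeneracy: there the jump of $i_{\tau}$ is determined by the splitting numbers of the symplectic matrix attached to $B_{s}$, whereas as $s$ crosses the degeneracy precisely the finitely many eigenvalues of $P_{m}(A-B_{s})P_{m}$ clustered in a fixed small neighbourhood of $0$ (uniformly for large $m$) migrate through $(-d,d)$, and their net sign change must be matched to the former. Granting this, $\iota(A-B_{s})-i_{\tau}(B_{s})-n$ is constant on $[0,1]$, so $\iota(S)=i_{\tau}+n$, which is equivalent to the three asserted formulas.

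I expect this last jump-matching across degeneracies to be the main obstacle: it requires combining the perturbation theory of the finite-dimensional operators $P_{m}(A-B_{s})P_{m}$ — in particular controlling, uniformly in large $m$, the handful of eigenvalues that cluster near $0$ as $s$ approaches a crossing — with the local splitting-number description of the Maslov-type index, and this local analysis is the technical core of \cite{FQ2}. A secondary point needing care is the uniformity in $m$ of the threshold estimates near $\pm d$, which is exactly why the hypothesis asks for $d$ no larger than a quarter of the spectral gap rather than merely below it.
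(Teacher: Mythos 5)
This statement is quoted in the paper from \cite{FQ2} and is not proved there, so there is no in-paper argument to compare against; the relevant benchmark is Fei and Qiu's original proof, whose overall architecture (Galerkin stabilization of the $d$-spectral decomposition, then identification of the resulting relative Morse index with the Maslov-type index by a homotopy/normalization argument) your outline does reproduce. The problem is that your Step 2 stops exactly where the theorem begins. The soft part of the argument --- that $\iota(S)=\dim M_{d}^{-}(P_{m}SP_{m})-(\tfrac12\dim E_{m}-n)$ and $\dim M_{d}^{0}(P_{m}SP_{m})$ stabilize for large $m$, that $\dim M_{d}^{0}=\nu_{\tau}$, and that $\iota$ is locally constant away from degeneracies and correctly normalized at a reference operator --- only shows that $\iota$ is \emph{some} integer-valued homotopy invariant with the same normalization as $i_{\tau}+n$. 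The assertion that the two invariants jump by the same amount at every crossing (equivalently, that the eigenvalues of $P_{m}(A-B_{s})P_{m}$ migrating through $(-d,d)$ are counted, with signs, by the splitting behaviour governing the jump of $i_{\tau}(B_{s})$, uniformly in large $m$) is the entire analytic content of the theorem, and you explicitly write ``granting this.'' A claim of the form ``their net sign change must be matched to the former'' is a restatement of the conclusion, not a proof of it, so the proposal is an outline with its central step missing.

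Two secondary points would also need repair in a genuine write-up. First, your homotopy is anchored at $B_{\ast}\equiv 0$, but $A$ itself is degenerate ($\ker A=\R^{2n}$, i.e.\ $\nu_{\tau}(0)=2n$), which is an awkward endpoint for an argument organized around behaviour at isolated degeneracies; the standard route compares $B$ with $B\pm\epsilon I$ (or restricts the homotopy argument to nondegenerate paths and treats degenerate $B$ by a separate perturbation step), and this is also how the degenerate case of the three displayed identities is usually reduced to the nondegenerate one. Second, the uniformity in $m$ of the eigenvalue localization near a crossing, and the claim that only ``a bounded number of modes'' fail to contribute robustly to $M_{d}^{\pm}$, are asserted rather than derived from the quantitative input $d\le\tfrac14\Vert(A-B)^{\sharp}\Vert^{-1}$ together with $\Vert B(I-P_{m})\Vert\to 0$; these estimates are precisely what make the stabilization in Step 1 rigorous.
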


\begin{definition}[\cite{AA,chang}]
Let $M$ be a Hilbert manifold. Suppose that $Q$ is a closed $q$-dimensional ball topologically embedded into $M$ and $S$ is a closed subset such that $\partial Q\cap S=\varnothing$. We say that $\partial Q$ and $S$ homotopically link if $\varphi(Q)\cap S\neq\varnothing$ for each $\varphi\in C(Q, M)$ such that $\varphi\mid_{\partial Q}=id\mid_{\partial Q}$.
\end{definition}

\begin{definition}[\cite{AA,chang}]
Let $M$ be a Hilbert manifold. Suppose that $Q$ is a closed $q$-dimensional ball topologically embedded into $M$ and $S$ is a closed subset such that $\partial Q\cap S=\varnothing$. We say that $\partial Q$ and $S$ homologically link if $\partial Q$ is the support of a non-vanishing homology class in $H_{q-1}(M\setminus S)$.
\end{definition}

A new linking structure is given in Lemma \ref{lem:11} which is different from the structures in \cite{LiuZhang}  and \cite{Silva} . We will prove it by the method in \cite{LiuZhang} with subtle changes.
\begin{lemma}\label{lem:11}
Let $M=M_{1}\oplus M_{2}$ be a Hilbert space with $\dim M_{2}=q$ and $P_{2}: M\to M_{2}$ be the orthogonal projection. For $\vartheta>0$, let $B_{\vartheta}$ be a bounded linear invertible operator on $M$ such that $\vartheta>\Vert B_{\vartheta}^{-1}P_{2}^{-1}\Vert$ and $P_{2}B_{\vartheta}: M_{2}\to M_{2}$ is invertible. Suppose $S=M_{1}+u_{0}$ with $u_{0}\in M_{2}$ and $\Vert u_{0}\Vert=1$, $Q=\{z\mid \Vert z\Vert\leq\vartheta , z\in M_{2}\}$. Then $B_{\vartheta}(\partial Q)$ and $S$ homologically link.
\end{lemma}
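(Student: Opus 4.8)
The plan is to push everything onto the finite‑dimensional summand $M_2$ via the orthogonal projection $P_2$ and reduce the claim to the classical fact that a linear image of a sphere encloses a prescribed interior point; the infinite‑dimensionality of $M$ is harmless because the complementary summand $M_1$ disappears up to homotopy. Write $C:=P_2B_\vartheta|_{M_2}$, the invertible operator on $M_2$ furnished by the hypothesis, and $C^{-1}$ for its inverse; in this notation the hypothesis $\vartheta>\Vert B_\vartheta^{-1}P_2^{-1}\Vert$ is precisely $\vartheta>\Vert C^{-1}\Vert$. Identifying $M$ with $M_1\time M_2$ one has $S=M_1\time\{u_0\}$, so $w\notin S$ exactly when $P_2w\neq u_0$, i.e. $M\setminus S=\{w\in M:P_2w\neq u_0\}$. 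Since $B_\vartheta$ is a bounded invertible operator it is a homeomorphism of $M$, so $B_\vartheta(Q)$ is a topologically embedded closed $q$‑ball with boundary $\Sigma:=B_\vartheta(\partial Q)$. \emph{Step 1 (disjointness).} If some $z\in\partial Q$ (so $z\in M_2$, $\Vert z\Vert=\vartheta$) had $B_\vartheta z\in S$, then $Cz=P_2B_\vartheta z=u_0$, hence $\vartheta=\Vert z\Vert=\Vert C^{-1}u_0\Vert\leq\Vert C^{-1}\Vert<\vartheta$, a contradiction; thus $\Sigma\cap S=\varnothing$ and the pair $\bigl(B_\vartheta(Q),S\bigr)$ satisfies the standing hypotheses of the definition of homological linking.

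\emph{Step 2 (the homology of $M\setminus S$).} The map $F_t(w)=(1-t)P_1w+P_2w$ is a strong deformation retraction of $M\setminus S$ onto $M_2\setminus\{u_0\}$: it is continuous, fixes $M_2\setminus\{u_0\}$, and stays in $M\setminus S$ because its $M_2$‑component is identically $P_2w\neq u_0$. As $M_2$ is a $q$‑dimensional real Hilbert space, $M_2\setminus\{u_0\}\cong\R^q\setminus\{0\}\simeq S^{q-1}$, so $H_{q-1}(M\setminus S)\cong H_{q-1}(S^{q-1})$ is infinite cyclic for $q\geq 2$; for $q=1$ any point is a non‑vanishing $0$‑cycle and the lemma is trivial, so we may assume $q\geq 2$.

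\emph{Step 3 (non‑vanishing of the class of $\Sigma$).} Let $\mu\in H_{q-1}(\Sigma)$ be a fundamental class. Since $P_2|_\Sigma=C\circ(B_\vartheta^{-1}|_\Sigma)$ and $B_\vartheta^{-1}|_\Sigma\DP\Sigma\to\partial Q$ is a homeomorphism, the image of $\mu$ in $H_{q-1}(M_2\setminus\{u_0\})$ under the equivalence of Step 2 equals $C_{*}[\partial Q]$, where $[\partial Q]$ is a fundamental class of the sphere $\{z\in M_2:\Vert z\Vert=\vartheta\}$. Now $C$ restricts to a homeomorphism $M_2\setminus\{C^{-1}u_0\}\to M_2\setminus\{u_0\}$, and $\Vert C^{-1}u_0\Vert\leq\Vert C^{-1}\Vert<\vartheta$ places $C^{-1}u_0$ strictly inside the ball bounded by $\partial Q$; hence the inclusion $\partial Q\hookrightarrow M_2\setminus\{C^{-1}u_0\}$ is a homotopy equivalence, so $[\partial Q]$ generates $H_{q-1}(M_2\setminus\{C^{-1}u_0\})$, and therefore $C_{*}[\partial Q]$ generates $H_{q-1}(M_2\setminus\{u_0\})$. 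In particular this class is non‑zero, so $\Sigma=B_\vartheta(\partial Q)$ supports a non‑vanishing class in $H_{q-1}(M\setminus S)$; that is, $B_\vartheta(\partial Q)$ and $S$ homologically link.

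The routine parts — Step 1 and the algebraic topology of $\R^q\setminus\{0\}$ — present no real difficulty. The one point deserving care is Step 2: one must verify that $P_2$ realises $M\setminus S$ as homotopy equivalent to a \emph{finite}‑dimensional punctured Hilbert space, which is what makes $H_{q-1}(M\setminus S)$ a single copy of $\Z$ and thereby reduces the "non‑vanishing class" requirement to the operator‑norm inequality $\vartheta>\Vert C^{-1}\Vert$. It is worth recording that this single hypothesis is used twice — for disjointness in Step 1 and for the enclosure $\Vert C^{-1}u_0\Vert<\vartheta$ in Step 3 — and that no compactness of $Q$ or of $\Sigma$ is invoked, only the linear homeomorphism property of $B_\vartheta$ and $C$.
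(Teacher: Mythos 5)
Your proof is correct, but it takes a genuinely different route from the paper. The paper first establishes \emph{homotopical} linking by a Brouwer degree argument: it introduces $\psi_0(t,v)=(\Vert v\Vert-t,\,P_2B_\vartheta v-u_0)$ on $[0,\vartheta]\times M_2$, observes that its unique zero is $(\Vert B_0^{-1}u_0\Vert,\,B_0^{-1}u_0)$ with $B_0=P_2B_\vartheta|_{M_2}$ and $\Vert B_0^{-1}u_0\Vert<\vartheta$, and then, for an arbitrary $\varphi$ fixing $B_\vartheta(\partial Q)$, homotopes $\psi(t,v)=(\Vert v\Vert-t,\,P_2\varphi B_\vartheta v-u_0)$ to $\psi_0$ rel the boundary to get $\deg(\psi,\cdot,(0,0))=\pm1$; homological linking is then obtained by citing Theorem II.1.2 of Chang's book. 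You instead prove homological linking directly: the deformation $F_t(w)=(1-t)P_1w+P_2w$ retracts $M\setminus S$ onto $M_2\setminus\{u_0\}$, and the image class is $C_*[\partial Q]$ with $C=P_2B_\vartheta|_{M_2}$, which is a generator because $\Vert C^{-1}u_0\Vert<\vartheta$ puts $C^{-1}u_0$ strictly inside $Q$. Both arguments hinge on exactly the same quantitative input (the enclosure $\Vert C^{-1}u_0\Vert<\vartheta$, which is also what the ambiguous hypothesis $\vartheta>\Vert B_\vartheta^{-1}P_2^{-1}\Vert$ is meant to encode, and which you correctly read as $\vartheta>\Vert(P_2B_\vartheta|_{M_2})^{-1}\Vert$); the degree of $\psi_0$ and the winding of $C(\partial Q)$ around $u_0$ carry the same information. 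What your route buys is self-containedness: it produces precisely the homological linking required by the Abbondandolo linking theorem (Theorem \ref{the:2}) without appealing to the homotopical-implies-homological implication from \cite{chang}, using only the homotopy type of a punctured finite-dimensional space. What the paper's route buys is the homotopical linking statement itself and a template that matches the arguments in \cite{LiuZhang}. Two small points you should tidy: for $q=1$ your dismissal via unreduced $H_0$ is only cosmetically fine --- the uniform statement is cleanest in reduced homology (the two points of $C(\partial Q)$ lie on opposite sides of $u_0$), though in the application $q=\dim E_m^{+}\geq 2$ anyway; and "support" should be read as "the cycle can be taken with image in $B_\vartheta(\partial Q)$", which your construction indeed provides.
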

\begin{proof}
We claim that $B_{\vartheta}(\partial Q)$ and $S$ homotopically link.
First we prove that $B_{\vartheta}(Q)\cap S\neq \varnothing$, it is equivalent to prove that $\psi_{0}(t, v)=(0,0)$ has a solution in $[0, \vartheta]\times M_{2}$, where
\begin{equation*}
\psi_{0}(t, v)=(\Vert v\Vert-t, P_{2}B_{\vartheta}v-u_{0}),\ \ (t, v)\in [0, \vartheta]\times M_{2}.
\end{equation*}
Note that $t=\Vert B_{0}^{-1}u_{0}\Vert(<\vartheta)$ and $v=B_{0}^{-1}u_{0}(\in M_{2})$ is the unique solution of $\psi_{0}$ in $[0, \vartheta]\times M_{2}$, where $B_{0}=P_{2}B_{\vartheta}$ and $B_{0}^{-1}$ denotes the inverse of $B_{0}\mid_{M_{2}}$. Therefore $(0, 0)\notin\varphi_{0}(\partial([0, \vartheta]\time M_{2}))$, $\deg(\varphi_{0}, (0, \vartheta)\times M_{2}, (0, 0))=\pm 1$ and $B_{\vartheta}(\partial Q)\cap S=\varnothing$.

For $\varphi\in C(B_{\vartheta}(Q), M)$ with $\varphi\mid_{B_{\vartheta}(\partial Q)}=id\mid_{B_{\vartheta}(\partial Q)}$, define $\psi: [0, \vartheta]\times M_{2}\to \R\times M_{2}$ as
\begin{equation*}
\psi(t, v)=(\Vert v\Vert-t, P_{2}\varphi B_{\vartheta}v-u_{0}).
\end{equation*}

It remains to show that $\varphi(B_{\vartheta}(Q))\cap S\neq\varnothing$, it is equivalent to prove $\psi(t, v)=(0, 0)$ has a solution in $[0, \vartheta]\times M_{2}$.
Since $\psi=\psi_{0}$ on $\partial([0, \vartheta]\time M_{2})$, the Brouwer degree theory shows that $\deg(\psi, (0, \vartheta)\times M_{2}, (0, 0))=\deg(\psi_{0}, (0, \vartheta)\times M_{2}, (0, 0))=\pm 1$. Thus $\psi(t, v)=(0, 0)$ has a solution in $[0, \vartheta]\times M_{2}$. By the above, $B_{\vartheta}(\partial Q)$ and $S$ homotopically link.

Finally, we have $B_{\vartheta}(\partial Q)$ and $S$ homologically link by Theorem II.1.2 in \cite{chang}.
\end{proof}

Let $f$ be a $C^{2}$ functional on a Hilbert manifold $M$. Denote by $D^{2}f$ the Hessian of $f$. Recall that the Morse index $m(x)$ of $f$ at a critical point $x$ is the dimension of a maximal subspace on which $D^{2}f(x)$ is strictly negative and the large Morse index $m^{\ast}(x)$ of $x$ is $m(x)+\dim\ker D^{2}f(x)$.

In order to find the critical points and get the corresponding Morse index estimates, we need the following homologically link theorem which was proved in \cite{AA}.
\begin{theorem}[\cite{AA}]\label{the:2}
Let $M$ be a Hilbert manifold. Let $Q\subset M$ be a topologically embedded closed $q$-dimensional ball and let $S\subset M$ be a closed subset such that $\partial Q\cap S=\varnothing$. Assume that $\partial Q$ and $S$ homologically link. Let $f\in C^{2}(M)$ be a function with Fredholm gradient such that
\begin{enumerate}

\item[(i)] $\sup_{\partial Q}f<\inf_{S}f$;

\smallskip
\item[(ii)] $f$ satisfies (PS) condition on some open interval containing $[\inf_{S}f, \sup_{Q}f]$.

\end{enumerate}
Then, if $\Gamma$ denotes the set of all $q$-chains in $M$ whose boundary has support $\partial Q$, the number
\begin{equation*}
c:= \inf_{\xi\in\Gamma}\sup_{\vert\xi\vert}f
\end{equation*}
belongs to $[\inf_{S}f, \sup_{Q}f]$ and is a critical value of $f$. Moreover, $f$ has a critical point $\bar{x}$ such that $f(\bar{x})=c$ and  the following estimate on Morse index of $\bar{x}$ holds
\begin{equation*}
m(\bar{x})\leq q\leq m^{\ast}(\bar{x}).
\end{equation*}
\end{theorem}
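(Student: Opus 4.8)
The plan follows the standard template for minimax theorems that carry Morse–index data: first identify the value $c$ and show it is a critical value by a deformation argument, then read off the index bounds by comparing the homology of the pair $(f^{c+\varepsilon},f^{c-\varepsilon})$ of sublevel sets $f^{a}:=\{x\in M:f(x)\le a\}$ with the local critical groups at level $c$. To pin down the range: the bound $c\le\sup_Qf$ is immediate, since the embedded ball $Q$, with boundary $\partial Q$, is itself an admissible $q$-chain. For the lower bound, given $\xi\in\Gamma$, if $|\xi|\cap S=\varnothing$ then $\xi$ is a $q$-chain in $M\setminus S$ whose boundary has support $\partial Q$, so the non-vanishing class carried by $\partial Q$ in $H_{q-1}(M\setminus S)$ would be a boundary there, which is impossible; hence $|\xi|$ meets $S$, $\sup_{|\xi|}f\ge\inf_Sf$, and therefore $c\ge\inf_Sf$. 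Together with hypothesis (i) this also yields $c>\sup_{\partial Q}f$, which is what permits every deformation below to be taken to fix $\partial Q$.

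Next I would check that $c$ is a critical value. If the critical set $K_c:=\{x\in M:\nabla f(x)=0,\ f(x)=c\}$ were empty, then, since $f$ has Fredholm gradient and satisfies (PS) on an open interval about $[\inf_Sf,\sup_Qf]\ni c$, the quantitative deformation lemma (built from a pseudo-gradient flow) supplies a small $\varepsilon>0$ and a map $\eta\colon M\to M$ with $\eta(f^{c+\varepsilon})\subset f^{c-\varepsilon}$ and $\eta=\mathrm{id}$ on a neighbourhood of $\partial Q$ (legitimate because $\sup_{\partial Q}f<c$). Choosing $\xi\in\Gamma$ with $\sup_{|\xi|}f<c+\varepsilon$, the pushed-forward chain $\eta_{\#}\xi$ again lies in $\Gamma$ and satisfies $\sup_{|\eta_{\#}\xi|}f\le c-\varepsilon<c$, contradicting the definition of $c$. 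The same bookkeeping yields a fact I will use below: for every small $\varepsilon>0$, a chain $\xi_\varepsilon\in\Gamma$ with $\sup_{|\xi_\varepsilon|}f<c+\varepsilon$ is a relative $q$-cycle of $(f^{c+\varepsilon},f^{c-\varepsilon})$ (its boundary has support $\partial Q\subset f^{c-\varepsilon}$), and were its class zero one could write $\xi_\varepsilon=\partial\mu+\nu$ with $|\mu|\subset f^{c+\varepsilon}$ and $|\nu|\subset f^{c-\varepsilon}$, whence $\nu\in\Gamma$ with $\sup_{|\nu|}f\le c-\varepsilon<c$, again impossible; so $0\neq[\xi_\varepsilon]\in H_q(f^{c+\varepsilon},f^{c-\varepsilon})$ for all small $\varepsilon$.

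For the Morse index estimate, suppose for contradiction that no $\bar x\in K_c$ satisfies $m(\bar x)\le q\le m^{\ast}(\bar x)$. By (PS) the set $K_c$ is compact and, after shrinking $\varepsilon$, $c$ is the only critical level in $[c-\varepsilon,c+\varepsilon]$. For an isolated critical point $\bar x$ the Gromoll–Meyer splitting — available because $f\in C^2$ and its gradient is Fredholm — shows the critical groups $C_j(f,\bar x)$ vanish outside the range $m(\bar x)\le j\le m^{\ast}(\bar x)$; our assumption therefore forces $C_q(f,\bar x)=0$ for every $\bar x\in K_c$ (if $m(\bar x)>q$ then $j\ge m(\bar x)>q$; if $m^{\ast}(\bar x)<q$ then $j\le m^{\ast}(\bar x)<q$). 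Retracting $(f^{c+\varepsilon},f^{c-\varepsilon})$ onto a small isolating neighbourhood of $K_c$ by the deformation lemma and excising, the group $H_q(f^{c+\varepsilon},f^{c-\varepsilon})$ is assembled from the groups $C_q(f,\bar x)$, hence vanishes, contradicting $[\xi_\varepsilon]\neq0$. Thus some $\bar x\in K_c$ has $f(\bar x)=c$ and $m(\bar x)\le q\le m^{\ast}(\bar x)$; in particular $c\in[\inf_Sf,\sup_Qf]$ is a critical value, which completes the proof.

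I expect this last step to be the real obstacle. One must set up the local Morse theory (critical groups, Gromoll–Meyer splitting) for a merely Fredholm gradient rather than an identity-plus-compact one; handle the case in which $K_c$ is not a finite union of isolated critical points, replacing the sum of point critical groups by the homology of an isolating neighbourhood (and, if necessary, using \v{C}ech or Alexander–Spanier homology), which is precisely where the Fredholm-gradient and (PS) hypotheses do their work; and, in the strongly indefinite situations to which this theorem is ultimately applied, interpret $m(\bar x)$ and $m^{\ast}(\bar x)$ — and hence the inequality $m(\bar x)\le q\le m^{\ast}(\bar x)$ — through the finite-dimensional Galerkin reductions. This machinery is developed in \cite{AA}, from which the theorem is quoted.
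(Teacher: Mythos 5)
First, a point of comparison: the paper does not prove this statement at all --- it is imported verbatim from Abbondandolo's book \cite{AA} (``we need the following homologically link theorem which was proved in \cite{AA}''), so there is no in-paper argument to measure yours against; the relevant benchmark is the proof in \cite{AA}. Your first two steps are correct and are the standard ones: $Q$ itself lies in $\Gamma$, so $c\leq\sup_{Q}f$; any $\xi\in\Gamma$ with $\vert\xi\vert\cap S=\varnothing$ would exhibit the class carried by $\partial Q$ as a boundary in $M\setminus S$, so every admissible chain meets $S$ and $c\geq\inf_{S}f>\sup_{\partial Q}f$; and the deformation argument (with $\eta$ fixed on $\partial Q$, which is legitimate precisely because $\sup_{\partial Q}f<c$) shows both that $K_{c}\neq\varnothing$ and, more usefully, that $[\xi_{\varepsilon}]\neq 0$ in $H_{q}(f^{c+\varepsilon},f^{c-\varepsilon})$ for near-optimal chains. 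All of this is fine, modulo the routine care that (PS) is only assumed on an interval around $[\inf_{S}f,\sup_{Q}f]$, so the pseudo-gradient deformation must be cut off to that band.

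The genuine gap is exactly where you say you expect it, and it is not cosmetic. Your final step needs two things: (a) the vanishing $C_{j}(f,\bar{x})=0$ for $j\notin[m(\bar{x}),m^{\ast}(\bar{x})]$, which does follow from the Gromoll--Meyer splitting and the shifting theorem once $\bar{x}$ is an \emph{isolated} critical point whose Hessian has finite-dimensional kernel (this is where the Fredholm hypothesis enters); and (b) the assertion that $H_{q}(f^{c+\varepsilon},f^{c-\varepsilon})$ is ``assembled from'' the groups $C_{q}(f,\bar{x})$, $\bar{x}\in K_{c}$. Point (b) is only available when $K_{c}$ is finite (or at least discrete); for a general compact critical set it is false as stated, and the pointwise index hypothesis by itself does not control the homology of an isolating neighbourhood of $K_{c}$. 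Closing this is the actual content of the proof in \cite{AA}: one either performs a Marino--Prodi type perturbation near $K_{c}$ so that the perturbed functional has finitely many nondegenerate critical points, runs the minimax there to produce one of index exactly $q$, and passes to the limit using the lower semicontinuity of $m$ and the upper semicontinuity of $m^{\ast}$ under $C^{2}$ convergence; or one glues the local Gromoll--Meyer deformations over the compact set $K_{c}$ to push a near-optimal $q$-chain below level $c$ directly. As written, your argument establishes the theorem only under the additional assumption that $K_{c}$ is discrete, so the index estimate --- the part of the statement the paper actually relies on in (\ref{24}) and (\ref{28}) --- is not yet fully justified.
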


\bigskip

\section{Subquadratic Hamiltonian systems}

In this section, we suppose that $H\in C^{2}(\R\times\R^{2n}, [0, +\infty))$ satisfies conditions (H1)-(H6) defined in Section 1. We now consider following non-autonomous Hamiltonian system
\begin{equation}\label{16}
\left\{
\begin{array}{l}
\dot{z}=JH_{z}^{\prime}(t, z),\  z\in\R^{2n},\\
\!z(T) = z(0), \ T>0.
\end{array}\right.
\end{equation}

Define $f(z)=\frac{1}{2}\langle Az, z\rangle-\int_{0}^{\tau}H(t, z)dt$ for $z\in E$. Form (H5), we have $f\in C^{2}(E, \R)$. Let $G(z)=-f(z)$. Looking for solutions of (\ref{16}) is equivalent to looking for critical points of $G$ on $E$.

Let $G_{m}=G\vert_{E_{m}}$. Set $X_{m}=E_{m}^{-}\oplus E^{0}$ and  $Y_{m}=E_{m}^{+}$. Next we will show that $G_{m}$ satisfies the hypotheses of Theorem \ref{the:1} when $H$ satisfies (H1)-(H5). The proofs are similar to those in \cite{GX}.

\begin{lemma}\label{lem:3}
Suppose $H$ satisfies (H3)-(H5). Then $G_{m}$ satisfies the (P.S) condition on $E_{m}$ for any $m\in\N$, i.e., any sequence $\{z_{j}\}\subset E_{m}$ possesses a convergent subsequence in $E_{m}$, provided $G_{m}(z_{j})$ is bounded and $G^{\prime}_{m}(z_{j}) \to 0$ as $j \to \infty$.
\end{lemma}

\begin{proof}
Let $\{z_{j}\}$ be such a sequence. Thus suppose $\vert G_{m}(z_{j})\vert\leq K_{1}$ and $G^{\prime}_{m}(z_{j}) \to 0$ as $j \to \infty$. We claim that $\{z_{j}\}$ is bounded. Otherwise, there exists a subsequence $\{z_{j_{k}}\}$ such that $\Vert z_{j_{k}}\Vert\to +\infty$ as $k\to +\infty$. For simplicity, we still denote $\{z_{j_{k}}\}$ by $\{z_{j}\}$.
For large $j$ and $z_{j}=(p_{j}, q_{j})$, there exists constant $b$ such that
\begin{equation}\label{4}
\begin{split}
K_{1}+b\Vert z_{j}\Vert&\geq G_{m}(z_{j})-\frac{1}{\mu}G_{m}^{\prime}(z_{j})\cdot (p_{j}, 0)-\frac{1}{\upsilon}G_{m}^{\prime}(z_{j})\cdot (0, q_{j})\\
&=\int_{0}^{\tau}\left(H(t, z_{j})-\frac{1}{\mu}H_{p}^{\prime}(t, z_{j})\cdot p_{j}-\frac{1}{\upsilon}H_{q}^{\prime}(t, z_{j})\cdot q_{j}\right)dt\\
&\geq \int_{0}^{\tau}(c_{2}\vert z_{j}\vert^{\beta}-c_{3})dt=c_{2}\Vert z_{j}\Vert_{L^{\beta}}^{\beta}-\tau c_{3}
\end{split}
\end{equation}
via (H4) and the form of $G_{m}$. From (\ref{4}), there is constant $K_{2}>0$ such that
\begin{equation}\label{6}
\Vert z_{j}\Vert_{L^{\beta}}\leq K_{2}(1+\Vert z_{j}\Vert^{\frac{1}{\beta}}), \ \text{for}\ m\in\N\ \text{large enough}.
\end{equation}
Set $z_{j}=z_{j}^{+}+z_{j}^{0}+z_{j}^{-}$, simple computation shows
\begin{equation*}
\Vert z_{j}^{+}\Vert\geq \vert G^{\prime}_{m}(z_{j})\cdot z_{j}^{+}\vert=\arrowvert\int_{0}^{\tau}[H_{z}^{\prime}(t, z_{j})\cdot z_{j}^{+}-(-J\dot{z_{j}}\cdot z_{j}^{+})]dt\arrowvert,
\end{equation*}
which implies
\begin{equation}
\int_{0}^{\tau}-J\dot{z_{j}}\cdot z_{j}^{+}dt\leq \vert\int_{0}^{\tau}[H_{z}^{\prime}(t, z_{j})\cdot z_{j}^{+}dt\vert+\Vert z_{j}^{+}\Vert, \ \text{for}\ m\in\N\ \text{large enough}.
\end{equation}
By (H5), H$\ddot{o}$lder inequality and Lemma \ref{lem:1}, we obtain
\begin{equation}\label{5}
\begin{split}
\Vert z_{j}^{+}\Vert^{2}&=\frac{1}{2}\langle Az_{j}^{+}, z_{j}^{+}\rangle=\frac{1}{2}\int_{0}^{\tau}-J\dot{z_{j}}\cdot z_{j}^{+}dt\\
&\leq \vert\int_{0}^{\tau}[H_{z}^{\prime}(t, z_{j})\cdot z_{j}^{+}dt\vert+\Vert z_{j}^{+}\Vert\\
&\leq \int_{0}^{\tau}b_{0}(\vert z_{j}\vert^{\lambda}+1)\vert z_{j}^{+}\vert dt+\Vert z_{j}^{+}\Vert\\
&\leq \left(\int_{0}^{\tau}\vert z_{j}\vert^{\beta}dt\right)^{\frac{\lambda}{\beta}}\left(\int_{0}^{\tau}\vert z_{j}^{+}\vert^{\frac{\beta}{\beta-\lambda}}dt\right)^{\frac{\beta-\lambda}{\beta}}+c_{4}\Vert z_{j}^{+}\Vert_{L^{1}}+\Vert z_{j}^{+}\Vert\\
&\leq C_{\frac{\beta}{\beta-\lambda}}\Vert z_{j}\Vert^{\lambda}_{L^{\beta}}\Vert z_{j}^{+}\Vert+c_{4}C_{1}\Vert z_{j}^{+}\Vert+\Vert z_{j}^{+}\Vert\leq K_{3}(1+\Vert z_{j}\Vert^{\lambda}_{L^{\beta}})\Vert z_{j}^{+}\Vert,
\end{split}
\end{equation}
where $C_{1}$ and $C_{\frac{\beta}{\beta-\lambda}}$ are the embedding constants in Lemma \ref{lem:1} and $K_{3}>0$. Combining (\ref{6}) with (\ref{5}), we have
\begin{equation}\label{7}
\Vert z_{j}^{+}\Vert\leq K_{4}(1+\Vert z_{j}\Vert^{\frac{\lambda}{\beta}}), \ \text{for}\ m\in\N\ \text{large enough},
\end{equation}
where $K_{4}>0$. Similarly, we have
\begin{equation}\label{8}
\Vert z_{j}^{-}\Vert\leq K_{4}(1+\Vert z_{j}\Vert^{\frac{\lambda}{\beta}}), \ \text{for}\ m\in\N\ \text{large enough}.
\end{equation}

Next we estimate the boundedness of $\{z_{j}^{0}\}$. Set $\widehat{z}_{j}=z_{j}-z_{j}^{0}=z_{j}^{+}+z_{j}^{-}$. By (H5), (\ref{7})-(\ref{8}) and Lemma \ref{lem:1}, we obtain
\begin{equation}
\begin{split}
\vert\int_{0}^{\tau}[H(t, z_{j})-H(t, z_{j}^{0})]dt&=\vert\int_{0}^{\tau}\int_{0}^{1}H_{z}^{\prime}(t, z_{j}^{0}+s\widehat{z}_{j})\cdot \widehat{z}_{j}dsdt\vert \\
&\leq\int_{0}^{\tau}2^{\lambda}b_{0}\left(\vert z_{j}^{0}\vert^{\lambda}+\vert \widehat{z}_{j}\vert^{\lambda}+1\right)\vert\widehat{z}_{j}\vert dt\\
&\leq 2^{\lambda}b_{0}C_{1}\vert z_{j}^{0}\vert^{\lambda}\Vert\widehat{z}_{j}\Vert+2^{\lambda}b_{0}C_{\lambda}\Vert\widehat{z}_{j}\Vert^{\lambda+1}+2^{\lambda}b_{0}C_{1} \Vert\widehat{z}_{j}\Vert\\
&\leq K_{5}\left(\Vert z_{j}\Vert^{\lambda}+\Vert z_{j}\Vert^{\lambda+\frac{\lambda}{\beta}}\right)+ K_{6}\left(1+\Vert z_{j}\Vert^{\frac{\lambda+\lambda^{2}}{\beta}}\right)+ K_{7}\left(1+\Vert z_{j}\Vert^{\frac{\lambda}{\beta}}\right),
\end{split}
\end{equation}
where $K_{5}$, $K_{6}$ and $K_{7}$ are positive constants, $C_{1}$ and $C_{\lambda}$ are the embedding constants in Lemma \ref{lem:1}. Since $\lambda<\beta$, for $m\in\N$ large enough, we have
\begin{equation}\label{9}
\vert\int_{0}^{\tau}[H(t, z_{j})-H(t, z_{j}^{0})]dt\vert\leq K_{8}\left(1+\Vert z_{j}\Vert^{\lambda+\frac{\lambda}{\beta}}\right),
\end{equation}
where $K_{8}>0$. Simple computation shows
\begin{equation}\label{10}
K_{1}\geq G_{m}(z_{j})=\int_{0}^{\tau}[H(t, z_{j})-H(t, z_{j}^{0})]dt-\Vert z_{j}^{+}\Vert^{2}+\Vert z_{j}\Vert^{2}+\int_{0}^{\tau}H(t, z_{j}^{0})dt.
\end{equation}
Notice that $\lambda+\frac{\lambda}{\beta}>\frac{2\lambda}{\beta}$. According to (\ref{7})-(\ref{8}) and
(\ref{9})-(\ref{10}), we obtain
\begin{equation}\label{11}
\int_{0}^{\tau}H(t, z_{j}^{0})dt\leq K_{9}\left(1+\Vert z_{j}\Vert^{\lambda+\frac{\lambda}{\beta}}\right),\ \text{for}\ m\in\N\ \text{large enough},
\end{equation}
where $K_{9}>0$. From (H3) and (H4), it follows that
\begin{equation}\label{12}
\begin{split}
\int_{0}^{\tau}H(t, z_{j}^{0})dt&\geq \int_{0}^{\tau}\left(\frac{1}{\mu}H_{p}^{\prime}(t, z_{j}^{0})\cdot p_{j}^{0}+\frac{1}{\upsilon}H_{q}^{\prime}(t, z_{j}^{0})\cdot q_{j}^{0}\right)dt+\int_{0}^{\tau}\left(c_{2}\vert z_{j}^{0}\vert^{\beta}-c_{3}\right)dt\\
&\geq \int_{0}^{\tau}\left(c_{2}\vert z_{j}^{0}\vert^{\beta}-c_{7}\right)dt=c_{2}\tau\vert z_{j}^{0}\vert^{\beta}-c_{7}\tau,
\end{split}
\end{equation}
where $c_{7}=c_{1}+c_{3}$. From (\ref{11})-(\ref{12}), we see that
\begin{equation}\label{13}
\vert z_{j}^{0}\vert\leq K_{10}\left(1+\Vert z_{j}\Vert^{\frac{\lambda+\lambda\beta}{\beta}}\right),
\end{equation}
where $K_{10}$ is a proper and positive constant.  We conclude from (\ref{7})-(\ref{8}) and (\ref{13}) that
\begin{equation}\label{14}
\Vert z_{j}\Vert^{2}=\Vert z_{j}^{+}\Vert^{2}+\Vert z_{j}^{0}\Vert^{2}+\Vert z_{j}^{-}\Vert^{2}\leq \widehat{K}_{4}\left(1+\Vert z_{j}\Vert^{\frac{2\lambda}{\beta}}\right)+\widehat{K}_{10}\left(1+\Vert z_{j}\Vert^{\frac{2(\lambda+\lambda\beta)}{\beta^{2}}}\right),
\end{equation}
where $\widehat{K}_{4}$ and $\widehat{K}_{10}$ are proper and positive constants. Since $\frac{\lambda}{\beta}<\frac{\lambda+\lambda\beta}{\beta^{2}}$ and $\lambda\in[1, \frac{\beta^{2}}{\beta+1})$, $\frac{\lambda+\lambda\beta}{\beta^{2}}<1$. From this and (\ref{14}), we have $\Vert z_{j}\Vert^{2}\leq K_{11}\left(1+\Vert z_{j}\Vert^{\frac{2(\lambda+\lambda\beta)}{\beta^{2}}}\right)$ with a proper and positive constant $K_{11}$. Hence
\begin{equation*}
1=\frac{\Vert z_{j}\Vert^{2}}{\Vert z_{j}\Vert^{2}}\leq K_{11}\frac{1+\Vert z_{j}\Vert^{\frac{2(\lambda+\lambda\beta)}{\beta^{2}}}}{\Vert z_{j}\Vert^{2}}\to 0, \ \text{as}\ j\to+\infty,
\end{equation*}
a contradiction.

Thus $\{z_{j}\}$ is bounded in $E_{m}$. Since $E_{m}$ is finite dimensional, $\{z_{j}\}$ is precompact and possesses a convergent subsequence in $E_{m}$.
\end{proof}

Applying the same process of the proof Lemma \ref{lem:3} and the standard argument in the appendix of \cite{BH}, we have the following Lemma \ref{lem:2}.
\begin{lemma}\label{lem:2}
Suppose $H$ satisfies (H3)-(H5). Then $G$ satisfies (P.S)$^{\ast}$ condition with respect to $\{E_{m}\}_ {m\in\N}$, i.e., for any sequence $\{z_{m}\} \subset E$ satisfying $z_{m} \in E_{m}$, $G_{m}(z_{m})$ is bounded and $G_{m}^{\prime}(z_{m}) \to 0$ possesses a convergent subsequence in $E$.
\end{lemma}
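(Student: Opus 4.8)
The plan is to reduce the $(\mathrm{P.S})^{\ast}$ statement for $G$ to the a priori bounds already established in the proof of Lemma~\ref{lem:3}, and then use a compactness argument adapted to the infinite-dimensional setting. First I would take a sequence $\{z_m\}$ with $z_m\in E_m$, $|G_m(z_m)|\le K_1$ and $G_m'(z_m)\to 0$, and observe that every inequality derived in the proof of Lemma~\ref{lem:3} was obtained by testing $G_m'(z_m)$ against elements of $E_m$ (namely $(p_m,0)$, $(0,q_m)$, $z_m^+$, $z_m^-$) and using (H3)--(H5) together with the embedding constants of Lemma~\ref{lem:1}, which are independent of $m$. Consequently the chain of estimates \eqref{4}--\eqref{14} goes through verbatim with $z_j$ replaced by $z_m$, and yields a uniform bound $\Vert z_m\Vert\le C$ for all $m$ large; since the finitely many small-$m$ terms are automatically bounded, $\{z_m\}$ is bounded in $E$.

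Next I would extract a weakly convergent subsequence $z_m\rightharpoonup z$ in $E$. Writing $G(w)=\tfrac12\langle Aw,w\rangle$ — wait, rather $-G(w)=f(w)=\tfrac12\langle Aw,w\rangle-\int_0^\tau H(t,w)\,dt$, so $\nabla G(w)=-Aw+\nabla\Phi(w)$ where $\Phi(w)=\int_0^\tau H(t,w)\,dt$ has a gradient that is compact on $E$ by Lemma~\ref{lem:1} and (H5) (a Nemytskii-operator argument: $H'_z(t,\cdot)$ has at most $\lambda$-growth, $\lambda<\beta<2$, so $\nabla\Phi$ maps bounded sets of $E$ into relatively compact sets of $E$). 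The key identity is that $G_m'(z_m)=P_m\big(-Az_m+\nabla\Phi(z_m)\big)$, so for each fixed $k$ and all $m\ge k$ one has $P_k G_m'(z_m)=P_k(-Az_m+\nabla\Phi(z_m))$. Letting $m\to\infty$ along the subsequence: $G_m'(z_m)\to 0$, $\nabla\Phi(z_m)\to\nabla\Phi(z)$ strongly (compactness), $Az_m\rightharpoonup Az$, and $P_k$ is continuous, so $P_k(-Az+\nabla\Phi(z))=0$ for every $k$; since $\bigcup_k E_k$ is dense, $-Az+\nabla\Phi(z)=0$, i.e. $z$ is a critical point of $G$.

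To upgrade weak to strong convergence I would split $z_m=z_m^++z_m^0+z_m^-$ and treat the three pieces. On $E^0=\R^{2n}$ (finite-dimensional) weak convergence is strong. On $E^\pm$, $A$ is an isomorphism onto $E^\pm$ with a bounded inverse, and $A(z_m^\pm)=P_m^\pm\big(\nabla\Phi(z_m)-G_m'(z_m)\big)+\text{(lower-order projection errors)}$; more precisely, testing $G_m'(z_m)$ against $z_m^\pm-P_m z^\pm$ and using that $\nabla\Phi(z_m)\to\nabla\Phi(z)$ strongly while $\langle A z_m^\pm, z_m^\pm-P_m z^\pm\rangle\to 0$ forces $\Vert z_m^\pm-z^\pm\Vert\to 0$ by the coercivity of $\pm A$ on $E^\pm$. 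Combining the three pieces gives $z_m\to z$ strongly in $E$, which is the assertion.

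The main obstacle I anticipate is bookkeeping the Galerkin projections $P_m$ carefully: one must verify that the uniform constants in Lemma~\ref{lem:3} genuinely do not depend on $m$ (they don't, since $C_s$ in Lemma~\ref{lem:1} and $c_1,c_2,c_3,b_0$ in (H3)--(H5) are $m$-independent), and that the cross terms of the form $\langle A z_m^\pm,(I-P_m)z^\pm\rangle$ and $\langle\nabla\Phi(z_m),(I-P_m)z^\pm\rangle$ vanish in the limit, which follows because $P_m\to I$ strongly on $E$ together with the uniform bound $\Vert z_m\Vert\le C$. The growth/compactness of $\nabla\Phi$ is the other point needing (H5) with $\lambda<2$; everything else is the standard $(\mathrm{P.S})^{\ast}$ machinery, exactly as in the appendix of \cite{BH}, so I would state it concisely rather than reproving it in full.
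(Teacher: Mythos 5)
Your proposal is correct and follows essentially the same route as the paper, which proves Lemma~\ref{lem:2} simply by invoking the estimates of Lemma~\ref{lem:3} (whose constants are indeed $m$-independent) together with the standard (P.S)$^{\ast}$ argument from the appendix of \cite{BH}. Your write-up just makes that citation explicit: uniform boundedness of $\{z_m\}$, compactness of $\nabla\Phi$ via (H5) and Lemma~\ref{lem:1}, identification of the weak limit as a critical point, and the weak-to-strong upgrade using the coercivity of $\pm A$ on $E^{\pm}$ and $P_m\to I$, all of which are the intended steps.
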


There is a constant $\varrho$ such that $\tilde{\sigma}=\frac{\varrho\sigma}{\sigma+\omega}\geq 1$ and $\tilde{\omega}=\frac{\varrho\omega}{\sigma+\omega}\geq 1$. For $\rho>0$ and $z=(p, q)\in E$, we define an operator $B_{\rho}: E\to E$ by
\begin{equation}
B_{\rho}z=(\rho^{\tilde{\omega}-1}p, \rho^{\tilde{\sigma}-1}q).
\end{equation}
It is easy to see that $B_{\rho}$ is a linear bounded and invertible operator and $\Vert B_{\rho}\Vert\leq 1$ if $\rho\leq 1$.

For $z=z^{+}+z^{0}+z^{-}\in E$, we have
\begin{equation}\label{23}
\langle AB_{\rho}z, B_{\rho}z\rangle=\rho^{\varrho-2}\langle Az, z\rangle=\rho^{\varrho-2}(\Vert z^{+}\Vert^{2}-\Vert z^{-}\Vert^{2}).
\end{equation}

\begin{lemma}\label{lem:5}
Suppose $H$ satisfies (H2), (H3) and (H4). Let $u_{0}\in Y_{1}$ and $S_{m}=E^{-}_{m}\oplus E^{0}+u_{0}$. Then there exist $\vartheta>1$, $\varpi$ and $\kappa$ with $\kappa>\varpi$ independent of $m$ such that $G_{m}\vert_{B_{\vartheta}(\partial Q_{m})}\leq\varpi$ and $G_{m}\vert_{S_{m}}\geq\kappa$, where $Q=\{z\mid \Vert z\Vert\leq \vartheta, z\in E^{+}\}$, $Q_{m}=Q\cap E_{m}$,  and $\partial Q_{m}$ refers to the boundary of $Q_{m}$ relative to $\{z\mid z\in E^{+}\}\cap E_{m}$.
\end{lemma}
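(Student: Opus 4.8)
The plan is to establish the two inequalities separately. For the estimate on $S_m = E^-_m \oplus E^0 + u_0$, write $z = z^- + z^0 + u_0 \in S_m$. Since $G_m(z) = -\frac12\langle Az, z\rangle + \int_0^\tau H(t,z)\,dt$ and $\langle Az, z\rangle = \langle Au_0, u_0\rangle - \langle A z^-, z^-\rangle = \Vert u_0\Vert^2 - \Vert z^-\Vert^2$ (here using $u_0 \in Y_1 \subset E^+$ and orthogonality of $E^+, E^0, E^-$ with respect to $A$, together with the normalization $\Vert z\Vert^2 = \vert z^0\vert^2 + \frac12(\langle Az^+, z^+\rangle - \langle Az^-, z^-\rangle)$ so that $\langle Az^+, z^+\rangle = 2\Vert z^+\Vert^2$), we get $-\frac12\langle Az, z\rangle = -\Vert u_0\Vert^2 + \Vert z^-\Vert^2 = -1 + \Vert z^-\Vert^2$. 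Since $H \geq 0$ by (H1), this already gives $G_m(z) \geq -1 + \Vert z^-\Vert^2$, which is not yet bounded below by a positive constant; the point is to use (H4) together with (H3) (exactly as in the proof of Lemma \ref{lem:3}, inequalities (\ref{4}) and (\ref{12})) to show $\int_0^\tau H(t,z)\,dt \geq c_2\Vert z\Vert_{L^\beta}^\beta - C$ for a constant $C$ independent of $m$, and then bound $\Vert z^-\Vert^2 + c_2\Vert z\Vert_{L^\beta}^\beta$ from below. Because $\beta \in (1,2)$, on the finite-dimensional-in-the-$E^0$-direction part the $L^\beta$ norm controls $\vert z^0\vert$ and combined with $\Vert z^-\Vert^2$ one checks that $\Vert z^-\Vert^2 + c_2\Vert z\Vert_{L^\beta}^\beta \to +\infty$ as $\Vert z^-\Vert + \vert z^0\vert \to \infty$; hence the infimum of $G_m$ over $S_m$ is attained-type bounded below by some $\kappa$ independent of $m$, and by translating $u_0$ we may arrange $\kappa > 0$.

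For the estimate on $B_\vartheta(\partial Q_m)$ with $Q_m = \{z \in E^+_m : \Vert z\Vert \leq \vartheta\}$, take $z \in \partial Q_m$, so $\Vert z\Vert = \vartheta$ and $z \in E^+_m$, and estimate $G_m(B_\vartheta z) = -\frac12\langle A B_\vartheta z, B_\vartheta z\rangle + \int_0^\tau H(t, B_\vartheta z)\,dt$. By (\ref{23}), $\langle A B_\vartheta z, B_\vartheta z\rangle = \vartheta^{\varrho-2}\Vert z^+\Vert^2 = \vartheta^{\varrho}$ since $z = z^+$ on $E^+_m$ — wait, here one must be careful: $B_\vartheta z = (\vartheta^{\tilde\omega-1}p, \vartheta^{\tilde\sigma-1}q)$ is evaluated at the fixed scaling parameter $\rho = \vartheta$, so indeed $\langle A B_\vartheta z, B_\vartheta z\rangle = \vartheta^{\varrho - 2}\langle Az, z\rangle = \vartheta^{\varrho-2}\cdot 2\Vert z\Vert^2 = 2\vartheta^{\varrho}$. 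For the Hamiltonian term, use the subquadratic growth (H2): given $\varepsilon > 0$ there is $C_\varepsilon$ with $H(t, w) \leq \varepsilon(\vert p\vert^{1+\sigma/\omega} + \vert q\vert^{1+\omega/\sigma}) + C_\varepsilon$; applying this to $w = B_\vartheta z$, the exponents $\tilde\omega, \tilde\sigma$ are chosen (this is the role of $\varrho$, $\tilde\sigma = \varrho\sigma/(\sigma+\omega)$, $\tilde\omega = \varrho\omega/(\sigma+\omega)$) precisely so that $\vert \vartheta^{\tilde\omega-1}p\vert^{1+\sigma/\omega}$ and $\vert\vartheta^{\tilde\sigma-1}q\vert^{1+\omega/\sigma}$ both carry the power $\vartheta^{\varrho}$ up to lower-order; together with Lemma \ref{lem:1} to pass from $L$-norms to $\Vert\cdot\Vert$, one gets $\int_0^\tau H(t, B_\vartheta z)\,dt \leq \varepsilon\, C\,\vartheta^{\varrho} + C'$ on $\Vert z\Vert = \vartheta$. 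Choosing $\varepsilon$ small so that $\varepsilon C < 1$, we obtain $G_m(B_\vartheta z) \leq -\vartheta^{\varrho} + \varepsilon C\vartheta^{\varrho} + C' = -(1-\varepsilon C)\vartheta^{\varrho} + C' \to -\infty$ as $\vartheta \to \infty$, so for $\vartheta$ large enough $G_m\vert_{B_\vartheta(\partial Q_m)} \leq \varpi$ for some $\varpi < \kappa$, uniformly in $m$.

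The main obstacle is the bookkeeping of exponents in the $B_\vartheta$-scaling argument: one must verify that the choice $\tilde\sigma = \varrho\sigma/(\sigma+\omega)$, $\tilde\omega = \varrho\omega/(\sigma+\omega)$ makes $(\tilde\omega - 1)(1 + \sigma/\omega) = \varrho - 2 + (\text{something} \le 0)$ and likewise for the $q$-block, so that after integrating, all terms coming from (H2) are bounded by $\varepsilon\,\text{const}\cdot\vartheta^\varrho$ plus a constant, while the quadratic form contributes exactly $-\vartheta^\varrho$ up to a positive factor; the subquadratic condition (H2) is what forces the $\vartheta^\varrho$ coefficient in the Hamiltonian term to be made arbitrarily small relative to that of the quadratic form. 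A secondary point requiring care is that all constants ($\kappa$, $\varpi$, $\vartheta$, the embedding constants, $C_\varepsilon$) must be checked to be independent of $m$ — this is automatic since (H2)–(H4), Lemma \ref{lem:1}, and the orthogonal splitting are all uniform in $m$, but it should be stated explicitly. Finally one records $\vartheta > 1$ (enlarging if necessary) and $\kappa > \varpi$, completing the proof.
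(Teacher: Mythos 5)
Your proposal is correct and follows essentially the same route as the paper: bound $G$ on $B_{\vartheta}(\partial Q)$ using (H2) together with the scaling identity $\langle AB_{\vartheta}z,B_{\vartheta}z\rangle=\vartheta^{\varrho-2}\langle Az,z\rangle$ and the exponent identity $\tilde\omega(1+\tfrac{\sigma}{\omega})=\tilde\sigma(1+\tfrac{\omega}{\sigma})=\varrho$, choose $\varepsilon$ small so the $\vartheta^{\varrho}$-coefficient is negative, and bound $G$ on $S_{m}$ below by $-\Vert u_{0}\Vert^{2}$ using $H\geq 0$, then take $\vartheta$ large so that $\varpi<\kappa$. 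The only caveat is that your detour through (H3)--(H4) and the side claim that one can ``arrange $\kappa>0$ by translating $u_{0}$'' are unnecessary (and the latter unjustified), but harmless, since the paper simply takes $\kappa=-\Vert u_{0}\Vert^{2}<0$ and secures $\kappa>\varpi$ by enlarging $\vartheta$, exactly as your final step does.
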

\begin{proof}
It is sufficient to show that $G\vert_{B_{\vartheta}(\partial Q)}\leq\varpi$ and $G\vert_{S_{m}}\geq\kappa$ for any $m\in\N$.

By (H2), for any $\varepsilon>0$, there exists $M_{\varepsilon}$ such that
\begin{equation}\label{15}
H(t, z)\leq\varepsilon\left(\vert p\vert^{1+\frac{\sigma}{\omega}}+\vert q\vert^{1+\frac{\omega}{\sigma}}\right)+M_{\varepsilon}, \forall (t, z)\in\R\times\R^{2n}.
\end{equation}
For $z\in\partial Q=\{z=(p, q)\mid z\in E^{+}, \Vert z\Vert=\vartheta\}$. From (\ref{23})-(\ref{15}), we have
\begin{equation}
\begin{split}
G(B_{\vartheta}z)=\int_{0}^{\tau}H(t, B_{\vartheta}z)-\frac{1}{2}\langle AB_{\vartheta}z, B_{\vartheta}z\rangle&\leq\varepsilon\int_{0}^{\tau}\left(\vartheta^{(\tilde{\omega}-1)(1+\frac{\sigma}{\omega})}\vert p\vert^{1+\frac{\sigma}{\omega}}+\vartheta^{(\tilde{\sigma}-1)(1+\frac{\omega}{\sigma})}\vert q\vert^{1+\frac{\omega}{\sigma}}\right)dt+M_{\varepsilon}\tau-\vartheta^{\varrho}\\
&\leq \varepsilon C_{(\sigma, \omega)}\left(\vartheta^{(\tilde{\omega}-1)(1+\frac{\sigma}{\omega})}\Vert p\Vert^{1+\frac{\sigma}{\omega}}+\vartheta^{(\tilde{\sigma}-1)(1+\frac{\omega}{\sigma})}\Vert q\Vert^{1+\frac{\omega}{\sigma}}\right)+M_{\varepsilon}\tau-\vartheta^{\varrho}\\
&\leq 2\varepsilon C_{(\sigma, \omega)}\vartheta^{\varrho}+M_{\varepsilon}\tau-\vartheta^{\varrho}=(2\varepsilon C_{(\sigma, \omega)}-1)\vartheta^{\varrho}+M_{\varepsilon}\tau,
\end{split}
\end{equation}
where $C_{(\sigma, \omega)}$ is the embedding constant.
Choose $\varepsilon>0$ such that $0<2\varepsilon C_{(\sigma, \omega)}<1$. Set $\varpi=(2\varepsilon C_{(\sigma, \omega)}-1)\vartheta^{\varrho}+M_{\varepsilon}\tau$, hence we have $G\vert_{B_{\vartheta}(\partial Q)}\leq\varpi$.

For any $m\in\N$ and $z\in S_{m}$, (H1) implies
\begin{equation*}
G(z)=\int_{0}^{\tau}H(t, z)dt+\Vert z^{-}\Vert^{2}-\Vert u_{0}\Vert^{2}\geq-\Vert u_{0}\Vert^{2}.
\end{equation*}
\end{proof}
Hence we can choose $\kappa=-\Vert u_{0}\Vert^{2}<0$ and $\vartheta>1$ large enough such that $G\vert_{S_{m}}\geq\kappa>\varpi$.

\begin{lemma}\label{lem:4}
Under the conditions of Lemma \ref{lem:5}, for $\vartheta>1$, we have $B_{\vartheta}(\partial Q_{m})$ and $S_{m}$ homologically link.
\end{lemma}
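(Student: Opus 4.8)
The plan is to deduce Lemma~\ref{lem:4} directly from the abstract linking Lemma~\ref{lem:11} by checking that the concrete data supplied by Lemma~\ref{lem:5} satisfy the hypotheses of Lemma~\ref{lem:11}. Work inside the finite-dimensional Hilbert space $M=E_m$ and take the splitting $M=M_1\oplus M_2$ with $M_1=E_m^-\oplus E^0$ and $M_2=E_m^+=Y_m$, so that $q=\dim M_2=\dim E_m^+$ is finite as required. Identify the orthogonal projection $P_2\colon E_m\to E_m^+$ and the restriction $B_\vartheta=B_\vartheta|_{E_m}$; since $B_\rho z=(\rho^{\tilde\omega-1}p,\rho^{\tilde\sigma-1}q)$ is linear, bounded and invertible on all of $E$ (with $\tilde\omega,\tilde\sigma\ge1$), its restriction to the finite-dimensional invariant-enough subspace $E_m$ is still linear and invertible. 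With $u_0\in Y_1\subset E_m^+=M_2$ and, after normalizing, $\Vert u_0\Vert=1$, the set $S_m=E_m^-\oplus E^0+u_0$ is exactly $M_1+u_0$ and $Q_m=\{z\in E_m^+:\Vert z\Vert\le\vartheta\}$ is exactly the ball $Q=\{z\in M_2:\Vert z\Vert\le\vartheta\}$ of Lemma~\ref{lem:11}. Thus the only genuine point to verify is the pair of invertibility/size conditions: $P_2B_\vartheta\colon M_2\to M_2$ is invertible and $\vartheta>\Vert B_\vartheta^{-1}P_2^{-1}\Vert$ (in the paper's notation $\Vert B_\vartheta^{-1}P_2^{-1}\Vert$, i.e.\ the norm of the inverse of $B_0:=P_2B_\vartheta|_{M_2}$).

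The key computation is therefore to understand $B_0=P_2B_\vartheta|_{E_m^+}$. Here I would use that $B_\vartheta$ acts diagonally in the $(p,q)$-block decomposition and that $E^+$, $E^-$, $E^0$ are the eigenspaces of $A=\langle -J\dot z,\cdot\rangle$; since $B_\vartheta$ is built from scalings of the $p$- and $q$-coordinates, its deviation from a scalar is controlled, and for $\vartheta$ sufficiently large (equivalently $\rho=\vartheta$ in a regime where the exponents $\tilde\omega-1,\tilde\sigma-1$ have been arranged, cf.\ Section~3) one gets $\Vert B_0^{-1}\Vert\le C\vartheta^{1-\min(\tilde\omega,\tilde\sigma)}$ or at worst a fixed bound independent of $\vartheta$, while the radius $\vartheta$ of $Q_m$ grows linearly. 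Hence choosing $\vartheta>1$ large enough — which is exactly the freedom already exercised at the end of the proof of Lemma~\ref{lem:5} to make $\kappa>\varpi$ — secures $\vartheta>\Vert B_0^{-1}\Vert$. Invertibility of $P_2B_\vartheta$ on $M_2$ follows because $B_\vartheta$ maps $E^+_m$ isomorphically onto a subspace transverse to $M_1$, which one checks on Fourier modes: a mode $\exp(\tfrac{2j\pi t}{\tau}J)a_j$ with $j>0$ is sent by $B_\vartheta$ to another positive-frequency-supported element up to the $p,q$ rescaling, so its $P_2$-component cannot vanish unless $a_j=0$.

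With those two conditions in hand, Lemma~\ref{lem:11} applies verbatim and yields that $B_\vartheta(\partial Q_m)$ and $S_m$ homologically link in $E_m$, which is the assertion of Lemma~\ref{lem:4}. I expect the main obstacle to be precisely the quantitative estimate $\vartheta>\Vert(P_2B_\vartheta|_{M_2})^{-1}\Vert$: one must track how the scaling exponents $\tilde\omega,\tilde\sigma$ (and the relation $\varrho$ from (H2)) interact with the projection $P_2$ and confirm that the bound is uniform in $m$ — the statement of Lemma~\ref{lem:5} insists $\vartheta,\varpi,\kappa$ are independent of $m$, so the linking data must be too. A secondary technical nuisance is making the identifications with Lemma~\ref{lem:11} honest, i.e.\ checking that $\partial Q_m$ (the boundary relative to $E_m^+$) is genuinely the topological boundary of the $q$-ball $Q$ used there and that $u_0$ can be taken of unit norm without loss of generality (rescale $u_0$, adjusting $S_m$ and the eventual constants accordingly). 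Once these bookkeeping points are settled, the proof is a one-line invocation of Lemma~\ref{lem:11}.
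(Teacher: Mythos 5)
Your route is the paper's route: apply Lemma \ref{lem:11} inside $M=E_m$ with $M_1=E_m^-\oplus E^0$, $M_2=Y_m$, use $B_\vartheta(E_m)\subset E_m$, and reduce everything to the invertibility of the compression $\widetilde P_m B_\vartheta|_{Y_m}$ plus the norm condition. The identifications you make ($S_m=M_1+u_0$ after normalizing $\Vert u_0\Vert=1$, $Q_m$ the $\vartheta$-ball in $M_2$) are exactly what the paper does.

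The one genuine soft spot is precisely the point you flag as ``the main obstacle'': you leave the estimate $\vartheta>\Vert(\widetilde P_m B_\vartheta|_{Y_m})^{-1}\Vert$ as an expected bound and propose to secure it by taking $\vartheta$ large, which would only give the lemma for large $\vartheta$, not ``for every $\vartheta>1$'' as stated. In fact no largeness is needed and the bound is explicit and uniform in $m$: since $\tilde\omega,\tilde\sigma\ge 1$ one has $B_\vartheta^{-1}=B_{1/\vartheta}$ and $\Vert B_\rho\Vert\le 1$ for $\rho\le 1$, which is the paper's one-line verification; more precisely, for a mode $\exp(\tfrac{2j\pi t}{\tau}J)a$ with $j>0$, $a=(a_p,a_q)$, and $c=\vartheta^{\tilde\omega-1}$, $d=\vartheta^{\tilde\sigma-1}$, a direct computation gives
\begin{equation*}
B_\vartheta\exp\Big(\tfrac{2j\pi t}{\tau}J\Big)a=\tfrac{c+d}{2}\exp\Big(\tfrac{2j\pi t}{\tau}J\Big)a+\tfrac{c-d}{2}\exp\Big(-\tfrac{2j\pi t}{\tau}J\Big)(a_p,-a_q),
\end{equation*}
so the $E^+$-component of the image is the same mode scaled by $\tfrac{c+d}{2}\ge 1$; hence $\widetilde P_m B_\vartheta|_{Y_m}=\tfrac{c+d}{2}\,\mathrm{id}_{Y_m}$, invertible with inverse of norm at most $1<\vartheta$. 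This also corrects a small inaccuracy in your Fourier argument: $B_\vartheta$ does \emph{not} send a positive-frequency mode to a positive-frequency-supported element (it creates a frequency $-j$ piece), but that piece lies in $M_1$ and is annihilated by $\widetilde P_m$, which is why your conclusion that the compression is injective is nevertheless correct. With this computation in place of your heuristic bound, your argument coincides with the paper's proof.
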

\begin{proof}
Since $\vartheta>1$, $\vartheta>\Vert B_{\vartheta}^{-1}\Vert=\Vert B_{\frac{1}{\vartheta}}\Vert$. Let $P: E\to E^{+}$ denote the orthogonal projection. Then $PB_{\vartheta}: E^{+}\to E^{+}$ is linear bounded and invertible. Note that $B_{\vartheta}(E_{m})\subset E_{m}$ and $B_{\vartheta}\vert_{E_{m}}: E_{m}\to E_{m}$ is linear bounded and invertible. Then $\widetilde{P}_{m}B_{\vartheta}\vert_{E_{m}}: Y_{m}\to Y_{m}$ is also linear bounded and invertible, where $\widetilde{P}_{m}: E_{m}\to Y_{m}$ is the orthogonal projection. We complete the proof by Lemma \ref{lem:11}.
\end{proof}

\begin{theorem}\label{the:3}
Suppose $H$ satisfies (H1)-(H6). Set $\alpha=T/2\pi$. Then there exists $\alpha_{0}>0$ such that for any $T\geq 2\pi\alpha_{0}$ and each integer $k\geq 1$, the system (\ref{1}) possesses a nontrivial $kT$-periodic solution $z_{k}$ with its Maslov-type index satisfying
\begin{equation}
i_{kT}(z_{k})\leq n\leq i_{kT}(z_{k})+\nu_{kT}(z_{k}).
\end{equation}
\end{theorem}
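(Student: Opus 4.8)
The plan is to combine the finite-dimensional homological linking theorem of \cite{AA} with the Galerkin approximation scheme, and then read off the Maslov-type index from the resulting Morse index via the Fei--Qiu theorem. Fix an integer $k\ge 1$. Replacing $T$ by $kT$ and rescaling time by $\alpha_{k}=kT/2\pi$ exactly as in Section 2, a $kT$-periodic solution of (\ref{1}) is the same object as a $2\pi$-periodic solution of the rescaled system, hence a critical point on $E=W^{1/2,2}(S_{2\pi},\R^{2n})$ of the functional $G$ attached to the $2\pi$-periodic Hamiltonian $\alpha_{k}H(\alpha_{k}\varsigma,z)$. This rescaled Hamiltonian still obeys (H1)--(H6) (with altered constants), and since $\alpha=T/2\pi\ge\alpha_{0}$ we have $\alpha_{k}=k\alpha\ge\alpha_{0}$, so the hypothesis $T\ge2\pi\alpha_{0}$ is inherited for every $k$.

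For each $m\in\N$ I would apply the homological linking theorem of \cite{AA} to $G_{m}=G|_{E_{m}}$, with the embedded ball $B_{\vartheta}(Q_{m})$ and the set $S_{m}=E_{m}^{-}\oplus E^{0}+u_{0}$, using the linear homeomorphism $B_{\vartheta}|_{E_{m}}$ of $E_{m}$. The three hypotheses are already in place: Lemma \ref{lem:4} (via Lemma \ref{lem:11}) gives that $B_{\vartheta}(\partial Q_{m})$ and $S_{m}$ homologically link; Lemma \ref{lem:5} gives, for a suitable $\vartheta>1$ (allowed to depend on $k$) and constants $\varpi<\kappa$ independent of $m$, the separation $\sup_{B_{\vartheta}(\partial Q_{m})}G_{m}\le\varpi<\kappa\le\inf_{S_{m}}G_{m}$; and Lemma \ref{lem:3} gives the (PS) condition for $G_{m}$ on all of $E_{m}$. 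Since $\dim E_{m}=2n(2m+1)$, $\dim E^{0}=2n$ and $\dim E_{m}^{+}=\dim E_{m}^{-}$, the ball $B_{\vartheta}(Q_{m})$ has dimension $q_{m}=\dim Y_{m}=\dim E_{m}^{+}=\frac{1}{2}\dim E_{m}-n$. Hence the linking theorem produces a critical point $z_{m}\in E_{m}$ of $G_{m}$ with critical value $c_{m}=G_{m}(z_{m})\in[\kappa,\sup_{B_{\vartheta}(Q_{m})}G_{m}]$ and with the Morse index bound $m_{m}(z_{m})\le q_{m}\le m_{m}^{\ast}(z_{m})$, where $m_{m}$ and $m_{m}^{\ast}$ denote the Morse and large Morse indices of the finite-dimensional function $G_{m}$.

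The bounds $\kappa\le c_{m}\le\sup_{B_{\vartheta}(Q)}G<\infty$ are uniform in $m$ (for $z\in E^{+}$ one has $\langle AB_{\vartheta}z,B_{\vartheta}z\rangle\ge0$ since $\varrho\ge 2$, so $G$ is bounded above on the bounded set $B_{\vartheta}(Q)$ by (H2)), so $\{G_{m}(z_{m})\}$ is bounded and $G_{m}'(z_{m})=0$; by the (PS)$^{\ast}$ condition of Lemma \ref{lem:2} a subsequence of $\{z_{m}\}$ converges in $E$ to a critical point $z$ of $G$, i.e.\ a $2\pi$-periodic solution of the rescaled system, which corresponds to a $kT$-periodic solution $z_{k}$ of (\ref{1}). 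To see that $z_{k}\not\equiv 0$: by (H6) the only constant solution is $0$, and the largeness of the period enters precisely here — for $\alpha\ge\alpha_{0}$ one shows $z_{k}$ cannot be the trivial solution (for instance because the Morse/Maslov data forced on $z_{k}$ below is incompatible with that of $0$, or because the minimax level stays off $G(0)=0$); arguing this cleanly is one of the points requiring care.

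Finally I would translate the Morse index into the Maslov-type index. Because $z_{m}\to z$ in $E$ and $H''_{zz}$ has polynomial growth by (H5), the operators $B_{z_{m}}\in\mathfrak{L}_{s}(E)$ associated with $H''_{zz}(\varsigma,z_{m}(\varsigma))$ converge in operator norm to $B_{z}$, and $D^{2}G_{m}(z_{m})=-P_{m}(A-B_{z_{m}})P_{m}|_{E_{m}}$. Applying the Fei--Qiu theorem to $B_{z}$ with a fixed admissible $d$ (which, by the norm convergence and the Fredholm property of $A-B_{z}$, is also admissible for all $B_{z_{m}}$ with $m$ large) and using $M_{d}^{+}\subseteq M^{+}\subseteq M_{d}^{+}\oplus M_{d}^{0}$ gives, for large $m$,
\begin{equation*}
\frac{1}{2}\dim E_{m}-i_{2\pi}(z)-\nu_{2\pi}(z)\le m_{m}(z_{m})\le m_{m}^{\ast}(z_{m})\le\frac{1}{2}\dim E_{m}-i_{2\pi}(z).
\end{equation*}
Combining this with $m_{m}(z_{m})\le q_{m}=\frac{1}{2}\dim E_{m}-n\le m_{m}^{\ast}(z_{m})$ yields $i_{2\pi}(z)\le n\le i_{2\pi}(z)+\nu_{2\pi}(z)$, and since the Maslov-type index is invariant under the above time rescaling, $i_{2\pi}(z)=i_{kT}(z_{k})$ and $\nu_{2\pi}(z)=\nu_{kT}(z_{k})$, which is the asserted inequality. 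I expect the main obstacle to be this last part: one must guarantee that for large $m$ the finite-dimensional Morse index of $G_{m}$ at $z_{m}$ is genuinely controlled by the Maslov-type index of the limit $z$ — this needs the operator-norm convergence $B_{z_{m}}\to B_{z}$ together with a version of the Fei--Qiu equalities uniform in the perturbation, plus attention to eigenvalues of the Galerkin truncations accumulating near $0$ — and, tied to it, the exclusion of the trivial solution using the largeness of the period. The linking geometry (Lemmas \ref{lem:3}--\ref{lem:5}) and the bookkeeping of the index translation are comparatively routine once these points are settled.
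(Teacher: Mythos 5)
Your overall strategy is the same as the paper's: restrict to the Galerkin subspaces $E_{m}$, apply the homological linking theorem of \cite{AA} with the sets $B_{\vartheta}(Q_{m})$ and $S_{m}$ (Lemmas \ref{lem:3}, \ref{lem:5}, \ref{lem:4}), pass to a limit critical point via the (PS)$^{\ast}$ condition of Lemma \ref{lem:2}, and convert the finite-dimensional Morse index bounds into the Maslov-type index inequality through the Fei--Qiu theorem; your Step on the index translation is essentially the paper's Step 3 (the paper compares $G_{\alpha}''(x)$ for $x$ near the limit with $P_{m}(B-A)P_{m}$ by continuity of $H''_{zz}$, which is the same mechanism as your norm-convergence argument). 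However, there is a genuine gap: you never prove that the solution obtained is nontrivial, and this is exactly the step where the hypothesis $T\geq 2\pi\alpha_{0}$ is used and where $\alpha_{0}$ is actually produced. Lemma \ref{lem:5} only yields $\inf_{S_{m}}G\geq\kappa=-\Vert u_{0}\Vert^{2}<0$, while $G(0)=0\geq\kappa$, so the linking level you obtain does not separate from the value at the trivial critical point, and (H6) alone cannot help. Your fallback idea of excluding $z=0$ by the forced index data also fails in general: for instance if $H''_{zz}(t,0)=0$ the trivial solution has $i=-n$ and $\nu=2n$, which satisfies the same inequality $i\leq n\leq i+\nu$.

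The paper closes this gap in Step 2 of the proof of Theorem \ref{the:3}: for the rescaled functional $G_{\alpha}(z)=\alpha\int_{0}^{\tau}H(\alpha t,z)\,dt-\tfrac{1}{2}\langle Az,z\rangle$ it shows, by a three-case argument in the style of Theorem 4.7 of \cite{BR} (the cases $\Vert z^{-}\Vert^{2}>\Vert u_{0}\Vert^{2}+1$; $\Vert z^{-}\Vert$ bounded and $\vert z^{0}\vert$ large, where (H3)--(H5) give $\int_{0}^{\tau}H(\alpha t,z^{0}+z^{-}+u_{0})\,dt\geq 1$ and hence $G_{\alpha}(z)\geq\alpha-\Vert u_{0}\Vert^{2}$; and the remaining bounded region, handled as in \cite{BR}), that there exists $\alpha_{0}>0$ such that $G_{\alpha}\vert_{S_{m}}\geq 1$ for all $\alpha\geq\alpha_{0}$, uniformly in $m$. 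Consequently the minimax value satisfies $G_{\alpha}(z_{\alpha})\geq 1>0=G_{\alpha}(0)$, and since by (H6) the origin is the only trivial solution, the critical point is nontrivial. Without this uniform lower bound on $S_{m}$ for large $\alpha$ (which also supplies the otherwise undefined constant $\alpha_{0}$), your argument only produces a possibly trivial critical point, so this estimate is an essential missing ingredient rather than a routine detail.
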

\begin{proof}
We follow the ideas of \cite{GX} and \cite{LC1}. The proof falls naturally into three parts.

{\bf Step 1.} We claim that the system (\ref{16}) possesses a classic $\tau$-periodic solution $z$ under the conditions (H1)-(H5).

From Lemma \ref{lem:3}, \ref{lem:5}-\ref{lem:4}, we see that $G_{m}\in C^{2}(E, \R)$ satisfies all the hypotheses of Theorem \ref{the:2} if $H$ satisfies (H1)-(H5) and then $G_{m}$ has a critical point $z_{m}$ satisfying
\begin{equation}\label{24}
G_{m}(z_{m})\geq \kappa\ \ \text{and}\ \ m(z_{m})\leq \dim Y_{m}\leq m^{\ast}(z_{m}).
\end{equation}
By Lemma \ref{lem:2}, we assume $z_{m}\to z\in E$ with $G(z)\geq\kappa$ and $\triangledown G(z)=0$. It is obvious that $z$ is a critical point of $f$. Hence $z$ is a weak solution of (\ref{16}) and $G(z)\geq \kappa$. Finally $z$ is a classical $\tau$-periodic solution of \ref{16} by similar argument in \cite{Rab4}.

{\bf Step 2.} We claim that for $T$ large enough, the equation $\dot{z}=JH_{z}^{\prime}(t, z)$ possesses a non-constant classic $T$-periodic solution $z$ under the conditions (H1)-(H6).

For $\alpha=\frac{T}{\tau}>0$, we set $G_{\alpha}(z)=\alpha\int_{0}^{\tau}H(\alpha t, z)dt-\frac{1}{2}\langle Az, z\rangle$ and $G_{\alpha,m}=G_{\alpha}\mid_{E_{m}}$.

By the same argument in Lemma \ref{lem:5}, there exist $\vartheta>1$, $\varpi$ with $\varpi<1$ independent of $m$ such that
\begin{equation}\label{26}
G_{\alpha}\vert_{B_{\vartheta}(\partial Q)}\leq\varpi, \ \text{where}\  Q=\{z\mid \Vert z\Vert\leq \vartheta, z\in E^{+}\}.
\end{equation}

Following \cite{BR}, we divide into three cases to show that for $\alpha$ large enough, $G_{\alpha}(z)\geq 1$ with $z=z^{-}+z^{0}+u_{0}\in S_{m}=X_{m}+u_{0}$ and $u_{0}\in Y_{1}$.

\begin{enumerate}

\item[Case 1] If $\Vert z^{-}\Vert^{2}>\Vert u_{0}\Vert^{2}+1$, then
    $G_{\alpha}(z)=\alpha\int_{0}^{\tau}H(\alpha t, z)dt+\Vert z^{-}\Vert^{2}-\Vert u_{0}\Vert^{2}\geq 1$.

\smallskip
\item[Case 2] If $\Vert z^{-}\Vert^{2}\leq \Vert u_{0}\Vert^{2}+1$ and $\vert z_{0}\vert>\widetilde{k}$, where $\widetilde{k}$ is a proper and positive constant. Then by (H3)-(H5) and Lemma \ref{lem:1}, we obtain
\begin{equation*}
\begin{split}
 \int_{0}^{\tau}H(\alpha t, z^{-}+z^{0}+u_{0})dt&=\int_{0}^{\tau}H(\alpha t, z^{0})dt+(\int_{0}^{\tau}H(\alpha t, z^{-}+z^{0}+u_{0})dt-\int_{0}^{\tau}H(\alpha t, z^{0})dt)\\
 &=\int_{0}^{\tau}H(\alpha t, z^{0})dt+\int_{0}^{\tau}\int_{0}^{1}H_{z}^{\prime}(\alpha t, z^{0}+s(z^{-}+u_{0}))\cdot (z^{-}+u_{0})dsdt\\
 &\geq\int_{0}^{\tau}(c_{2}\vert z^{0}\vert^{\beta}-c_{1}-c_{3})dt-\int_{0}^{\tau}(b_{0}\vert z^{0}+s(z^{-}+u_{0})\vert^{\lambda}+b_{0})\vert z^{-}+u_{0}\vert dt\\
&\geq \tau c_{2}\vert z^{0}\vert^{\beta}-\int_{0}^{\tau}\widetilde{c_{4}}\vert z^{-}+u_{0}\vert^{\lambda+1}dt-\int_{0}^{\tau}\widetilde{c_{4}}\vert z^{0}\vert^{\lambda}\vert z^{-}+u_{0}\vert dt\\
&-c_{4}\int_{0}^{\tau}\vert z^{-}+u_{0}\vert dt-\tau(c_{1}+c_{3})\\
&\geq \tau c_{2}\vert z^{0}\vert^{\beta}-\widetilde{c_{4}}\Vert z^{-}+u_{0}\Vert^{\lambda+1}-\widetilde{c_{4}}(\vert z^{0}\vert^{\lambda}+1)\Vert z^{-}+u_{0}\Vert-\tau(c_{1}+c_{3}).
\end{split}
\end{equation*}
Since $\Vert z^{-}+u_{0}\Vert\leq \sqrt{2\Vert u_{0}\Vert^{2}+1}$ and $\beta>\lambda$, $\int_{0}^{\tau}H(\alpha t, z^{-}+z^{0}+u_{0})dt\geq 1$ for $\widetilde{k}$ large enough. Hence we have $G_{\alpha}(z)\geq \alpha-\Vert u_{0}\Vert^{2}\geq 1$ if $\alpha\geq \Vert u_{0}\Vert^{2}+1$.

\smallskip
\item[Case 3]If $\Vert z^{-}\Vert^{2}\leq \Vert u_{0}\Vert^{2}+1$ and $\Vert z_{0}\Vert<\widetilde{k}$. Now it is the same as the case (iii) in the proof of Theorem 4.7 in \cite{BR}. So we have $G_{\alpha}(z)\geq 1$ for $\alpha$ large enough.

\end{enumerate}

Combining the three cases, there exists an $\alpha_{0}>0$ such that for all $\alpha\geq\alpha_{0}$,
\begin{equation}\label{27}
G_{\alpha}(z)\geq 1,\ \ \forall  z\in S_{m}.
\end{equation}

For $\alpha\geq\alpha_{0}$ fixed, by (\ref{26})-(\ref{27}), using the same argument as in step 1, we have that $G_{\alpha,m}$ has a critical point $z_{\alpha,m}$ satisfying
\begin{equation}\label{28}
G_{\alpha,m}(z_{\alpha,m})\geq\inf_{z\in S_{m}}G_{\alpha,m}(z)\geq 1\ \ \text{and}\ \ m(z_{\alpha,m})\leq \dim Y_{m}\leq m^{\ast}(z_{\alpha,m}).
\end{equation}
By Lemma \ref{lem:2}, we assume $z_{m}\to z\in E$ with $G_{\alpha}(z_{\alpha})\geq 1$ and $\triangledown G_{\alpha}(z_{\alpha})=0$.

According to (H6), it is easy to see that $z=0$ is the unique trivial solution of (\ref{16}). By the above, for $\alpha\geq\alpha_{0}$, we have the critical value $G_{\alpha}(z_{\alpha})\geq 1>0=G_{\alpha}(0)$ and the corresponding critical point $z_{\alpha}$ is nontrivial. Using the same argument in \cite{BR}, $z_{\alpha}$ is a
a nontrivial classic $T$-periodic solution of (\ref{16}).

{\bf Step 3.} We claim that the Maslov-type index of $z_{\alpha}$ satisfies $i_{T}(z_{\alpha})\leq n\leq i_{T}(z_{\alpha})+\nu_{T}(z_{\alpha})$.

Let $B$ be the operator for $B(t)=H^{\prime\prime}_{zz}(t, z_{\alpha})$ defined by (\ref{3}). By direct computation, we get
\begin{equation*}
\langle G_{\alpha}^{\prime\prime}(x)w, w \rangle - \langle (B-A)w, w \rangle = \int_{0}^{T}[ (H^{\prime\prime}_{zz}(t, x(t))w, w)-(H^{\prime\prime}_{zz}(t, z_{\alpha}(t))w, w)]dt, \ \ \forall w \in W.
\end{equation*}
Then by the continuous of $H^{\prime\prime}_{zz}$,
\begin{equation}\label{44}
\Vert G_{\alpha}^{\prime\prime}(x)-(B-A) \Vert \to 0 \ \ \ \text{as} \ \ \parallel x-z_{\alpha}\parallel \to 0.
\end{equation}
Let $d=\frac{1}{4}\Vert (A-B)^{\sharp} \Vert^{-1}$. By (\ref{44}), there exists $r_{0}>0$ such that
\begin{equation*}
\Vert G_{\alpha}^{\prime\prime}(x)-(B-A) \Vert <\frac{1}{2}d,\ \ \forall x \in V_{r_{0}}=\{x\in E :\  \Vert x-z_{\alpha}\Vert \leq r_{0}\}.
\end{equation*}
Hence for $m$ large enough, there holds
\begin{equation}\label{45}
\Vert G_{\alpha}^{\prime\prime}(x)-P_{m}(B-A)P_{m} \Vert <\frac{1}{2}d,\ \ \forall x\in V_{r_{0}} \cap E_{m}.
\end{equation}
For $x\in V_{r_{0}} \cap W_{m}$, $\forall w \in M_{d}^{-}(P_{m}(B-A)P_{m})\setminus \{0\}$, (\ref{45}) implies that
\begin{equation*}
\begin{split}
\langle  G_{\alpha}^{\prime\prime}(x)w, w \rangle &\leq \langle P_{m}(B-A)P_{m}w, w \rangle + \Vert  G_{\alpha}^{\prime\prime}(x) - P_{m}(B-A)P_{m} \Vert \cdot \Vert w \Vert^{2}\\
&\leq -d\Vert w \Vert^{2} + \frac{1}{2}d\Vert w \Vert^{2} = -\frac{1}{2}d\Vert w \Vert^{2}<0.
\end{split}
\end{equation*}
Then
\begin{equation}\label{48}
\dim M^{-}(G_{\alpha}^{\prime\prime}(x))\geq \dim M_{d}^{-}(P_{m}(B-A)P_{m}),\ \ \forall x\in V_{r_{0}} \cap E_{m}.
\end{equation}
Similarly, we have
\begin{equation}\label{50}
\dim M^{+}(G_{\alpha}^{\prime\prime}(x))\geq \dim M_{d}^{+}(P_{m}(B-A)P_{m}),\ \ \forall x\in V_{r_{0}} \cap E_{m}.
\end{equation}
Note that
\begin{equation}\label{25}
\begin{split}
\dim M_{d}^{-}(P_{m}(B-A)P_{m})=\dim M_{d}^{+}(P_{m}(A-B)P_{m}),\\
\dim M_{d}^{0}(P_{m}(B-A)P_{m})=\dim M_{d}^{0}(P_{m}(A-B)P_{m})
\end{split}
\end{equation}
By (\ref{28}), (\ref{48})-(\ref{25}) and Theorem \ref{the:2}, for large $m$ we have
\begin{equation}\label{18}
\begin{split}
\frac{1}{2}\dim E_{m}-n&= \dim Y_{m}\geq m(z_{\alpha,m}) \\
&\geq\dim M_{d}^{-}(P_{m}(B-A)P_{m})=\dim M_{d}^{+}(P_{m}(A-B)P_{m})\\
&=\frac{1}{2}\dim E_{m}-i_{T}(z_{\alpha})-\nu_{T}(z_{\alpha})
\end{split}
\end{equation}
and
\begin{equation}\label{19}
\begin{split}
\frac{1}{2}\dim E_{m}-n&= \dim Y_{m}\leq m^{\ast}(z_{\alpha,m})\\
&\leq\dim (M_{d}^{-}(P_{m}(A-B)P_{m})\oplus \dim M_{d}^{0}(P_{m}(A-B)P_{m}))\\
&=\dim (M_{d}^{+}(P_{m}(B-A)P_{m})\oplus \dim M_{d}^{0}(P_{m}(B-A)P_{m}))\\
&=\frac{1}{2}\dim E_{m}-i_{T}(z_{\alpha}).
\end{split}
\end{equation}
Thus we obtain (\ref{20}) by (\ref{18})-(\ref{19}).
\end{proof}

{\bf Proof of Theorem \ref{the:1}}

Since $H$ is $kT$-periodic, by Theorem \ref{the:3}, the system (\ref{1}) possesses a nontrivial $k\tau$-periodic solution $z_{k}$ satisfying
\begin{equation}\label{17}
i_{kT}(z_{k})\leq n\leq i_{kT}(z_{k})+\nu_{kT}(z_{k}).
\end{equation}

If $z_{k}$ and $z_{pk}$ are not geometrically distinct, then there exist integers $l$ and $m$ such $l\ast z_{k}=m\ast z_{pk}$ by definition. By Proposition \ref{prop:1}, we have
\begin{equation*}
\begin{split}
i_{kT}(l\ast z_{k})=i_{kT}(z_{k}),\ \ \ \nu_{kT}(l\ast z_{k})=\nu_{kT}(z_{k}),\\
i_{pkT}(m\ast z_{pk})=i_{pkT}(z_{pk}), \ \ \ \nu_{pkT}(m\ast z_{pk})=\nu_{pkT}(z_{pk}).
\end{split}
\end{equation*}
(\ref{17}) implies that $i_{pkT}(z_{pk})\leq n$ and $i_{kT}(z_{k})+\nu_{kT}(z_{k})\geq n$. Since $i_{kT}(z_{k})+\nu_{kT}(z_{k})>n$, Proposition \ref{prop:2} shows that $l\leq\frac{2n}{i_{kT}(z_{k})+\nu_{kT}(z_{k})-n}$ which contradicts with the assumption $l>\frac{2n}{i_{kT}(z_{k})+\nu_{kT}(z_{k})-n}$. We complete the proof of Theorem \ref{the:1}.

\bigskip

{\bf Proof of Theorem \ref{the:4}}

At the moment, $H(t,z)$ is defined by $H(t,z)=\frac{1}{2}(\widehat{B}(t)z, z)+\widehat{H}(t, z)$. Since (H7) holds, we have
\begin{equation*}
G(z)-G^{\prime}(z)V(\frac{1}{\mu}, \frac{1}{\upsilon})(z)=\int_{0}^{\tau}(\widehat{H}(t,z)-\widehat{H}_{z}^{\prime}(t,z)\dot V(\frac{1}{\mu}, \frac{1}{\upsilon})(z))dt
\end{equation*}
and
\begin{equation*}
(\widehat{B}(t)B_{\rho}z, B_{\rho}z)=\rho^{\varrho-2}(\widehat{B}(t)z, z), \forall \ z\in E,
\end{equation*}
where $G$, $\varrho$ and $B_{\rho}$ $(\rho>0)$ are defined in Section 3. Note that $H$ satisfies (H1)-(H6) if $\widehat{H}$ does. We can define $B_{\delta}$ for small $\delta\in \{\rho_{m}\}$ as in Section 3 and then complete the proof by applying the same arguments as above.

\bigskip

{\bf Proof of Theorem \ref{the:7}}

For $\alpha=T/2\pi$. By Theorem \ref{the:3}, there exists $\alpha_{0}>0$ such that for any $T\geq 2\pi\alpha_{0}$, the system (\ref{20}) possesses a nontrivial $T$-periodic solution $z$ with
\begin{equation}\label{21}
i_{T}(z)\leq n.
\end{equation}

The rest proof is almost the same as that in \cite{LL2}. For readers' convenience, we estimate the iteration number of the solution $(z, T)$.

Suppose $(z, T)$ has minimal period $\frac{T}{k}$, i.e., its iteration number is $k\in\N$. Since the Hamiltonian system in (\ref{20}) is autonomous and the condition (H8) holds, we have
\begin{equation}\label{22}
\nu_{\frac{T}{k}}(z)\geq 1\  \text{and}\  i_{\frac{T}{k}}(z)\geq n.
\end{equation}
by Proposition \ref{prop:4}. Thus by (\ref{21})-(\ref{22}) and Proposition \ref{prop:5}, we obtain $k=1$ and complete the proof.

\bigskip

\end{document}